\documentclass[twoside,a4paper]{amsart}
\usepackage{amssymb, amsmath,latexsym, amsthm}
\usepackage{fancyhdr}

\newtheoremstyle{dotless}{}{}{\itshape}{}{\bfseries}{}{ }{}
\theoremstyle{dotless}
\newtheorem{proposition}{Proposition}[section]
\newtheorem{theorem}{Theorem}
\newtheorem{theoremm}{Theorem}
\newtheorem{lemma}{Lemma}
\theoremstyle{definition}
\DeclareMathOperator{\supp}{supp}

\DeclareMathOperator{\rank}{rank}
\DeclareMathOperator{\dist}{dist}
\DeclareMathOperator{\Lip}{Lip}

\title{Boundary oscillations of harmonic functions in Lipschitz domains}
\author{P. Mozolyako}
\address{Department of Mathematical Sciences, Norwegian University of Science andechnology, NO-7491, Trondheim, Norway}
\email{pavel.mozolyako@math.ntnu.no}
\keywords{\small{Harmonic functions, Bloch functions, growth classes, radial weights}}
\thanks{ Author was supported by Research Council of Norway, grant 204726/V30.}
\subjclass[2010]{31B05,31B25, 60G46}
\begin{document}
\begin{abstract}
Let $u(x,y)$ be a harmonic function in the halfspace $\mathbb{R}^n\times\mathbb{R}_+$ that grows near the boundary not faster than some fixed majorant $w(y)$. Recently it was proven that an appropriate weighted average along the vertical lines of such a function satisfies the Law of Iterated Logarithm (LIL). We extend this result to a class of Lipschitz domains in $\mathbb{R}^{n+1}$. In particular, we obtain the local version of this LIL for the upper halfspace. The proof is based on approximation of the weighted averages by a Bloch function, satisfying some additional condition determined by the weight $w$. The growth rate of such Bloch function depends on $w$ and, for slowly increasing $w$, turns out to be slower than the one provided by LILs of Makarov and Llorente. We discuss the necessary condition for an arbitrary Bloch function to exhibit this type of behaviour. 
\end{abstract}
\maketitle

\renewcommand{\thetheoremm}{\Alph{theoremm}}
\section{Introduction}
Let $w:\mathbb{R}_+\rightarrow\mathbb{R}_+$ be a continuous decreasing function, 
\begin{equation}\label{e:0}
\lim_{y\rightarrow0+}w(y) = \infty,\; w(y) = 1,\, y>1,
\end{equation}
that satisfies the doubling condition
\begin{equation}\label{e:wdc}
w(y) \leq Dw(2y),\quad y\in\mathbb{R}_+,
\end{equation}
for some constant $D>0$.
Given a Lipschitz function $\phi:\mathbb{R}^n\rightarrow \mathbb{R}$, we denote by $\Omega_{\phi}$ the domain above the graph of $\phi$,
\begin{equation}\notag
\Omega_{\phi} = \{(x,y): x\in\mathbb{R}^n, y\geq \phi(x)\}.
\end{equation}
We consider harmonic functions in $\Omega_{\phi}$ with the growth restriction
\begin{equation}\notag
|u(x,y)| \leq Kw(\dist((x,y),\partial\Omega_{\phi})),\quad (x,y) \in \Omega_{\phi}.
\end{equation}
The space of these functions is denoted by $h^{\infty}_{w}(\Omega_{\phi})$ and the smallest $K$ for which this inequality is satisfied is called the norm of $u$ in $h^{\infty}_{w}(\Omega_{\phi})$. We denote it by $\|u\|_{w,\infty}$.\par
In \cite{EML:13} the following result was obtained
\begin{theoremm}
Let $u$ be a function in $h^{\infty}_w(\mathbb{R}^{n+1}_+)$. For $x\in\mathbb{R}^n$ and $0<\delta\leq1$ put
\begin{equation}\notag
I_0(x,\delta) = \int_{\delta}^1u(x,y)\,d\left(\frac{1}{w(y)}\right).
\end{equation}
Then the following inequality holds
\begin{equation}\label{e:inot}
\limsup_{\delta\rightarrow0}\frac{I_0(x,\delta)}{\sqrt{\log w(\delta)\log\log\log w(\delta)}} \leq C\|u\|_{w,\infty},\quad a.e.\, x\in\mathbb{R}^n,
\end{equation}
where the constant $C = C(n)$ depends only on the dimension $n$.
\end{theoremm}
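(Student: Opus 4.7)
The plan, as indicated by the abstract, is to realize $I_0(x,\delta)$---modulo a controllable error---as the value along $\delta\to 0$ of an auxiliary harmonic function $F$ on a halfspace whose Bloch seminorm is controlled by $\|u\|_{w,\infty}$, provided one works in the rescaled height coordinate $s=\log w(\delta)$. The Makarov--Llorente LIL in these new coordinates will then give exactly the denominator $\sqrt{\log w(\delta)\log\log\log w(\delta)}$ in \eqref{e:inot}. The doubling hypothesis \eqref{e:wdc} is what permits the rescaling while keeping all Bloch constants under control.

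I first normalize $\|u\|_{w,\infty}=1$ and, using \eqref{e:wdc}, select a geometric sequence $y_k\searrow 0$ with $w(y_k)\asymp 2^k$ and uniformly bounded ratios $y_{k-1}/y_k$. Decomposing
\[
I_0(x,y_N)=\sum_{k=1}^{N}a_k(x),\qquad a_k(x):=\int_{y_k}^{y_{k-1}}u(x,y)\,d\!\left(\tfrac{1}{w(y)}\right),
\]
the bound $|u|\leq w\leq 2^k$ on $[y_k,y_{k-1}]$ together with the mass $\asymp 2^{-k}$ of the measure $d(1/w)$ there yields $\|a_k\|_{\infty}\lesssim 1$. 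These $\{a_k\}$ will play the role of the basic martingale-like building blocks.

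Next I build a harmonic function $F$ on $\mathbb{R}^n\times\mathbb{R}_+$ whose trace at $s=N$ approximates $\sum_{k\leq N}a_k$. A natural candidate is the harmonic correction of $(x,s)\mapsto\int_{t(s)}^1 u(x,y)\,d(1/w(y))$, where $t(s)$ is defined by $\log w(t(s))=s$; equivalently one may Poisson-extend the boundary data $x\mapsto\sum_{k\leq N}a_k(x)$ in the new $(x,s)$-halfspace. Using the interior Cauchy estimate $|\nabla_x u(x,y)|\lesssim w(y)/y$ for the harmonic $u$ together with the doubling consequence $y|w'(y)|/w(y)\lesssim 1$, one verifies the Bloch bound $\sup_{(x,s)}\bigl(|\partial_s F|+|\nabla_x F|\bigr)\lesssim 1$. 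The construction also endows $F$ with the extra $w$-adapted structure promised in the abstract: the unit-$s$-scale increments $F(\cdot,s+1)-F(\cdot,s)$ are uniformly bounded, since one unit of $s$ corresponds to exactly one doubling of $w$.

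Finally the Makarov--Llorente LIL for Bloch harmonic functions in $\mathbb{R}^n\times\mathbb{R}_+$ gives, for almost every $x$,
\[
\limsup_{s\to\infty}\frac{F(x,s)}{\sqrt{s\log\log s}}\leq C,
\]
and substituting back $s=\log w(\delta)$, while absorbing $|I_0(x,\delta)-F(x,\log w(\delta))|=O(1)$ into the constant, yields \eqref{e:inot}. I expect the main obstacle to be the third step: simultaneously establishing the Bloch bound and the $w$-adapted atomic control on $F$ in the rescaled coordinates. This will require a careful interplay between the harmonicity of $u$ (which supplies Cauchy/mean-value cancellation across scales) and the doubling of $w$ (which controls the density of scales on which the cancellation must act), and is what separates this argument from a direct invocation of Makarov's LIL in the original $(x,y)$-halfspace, whose denominator $\sqrt{\log(1/\delta)\log\log\log(1/\delta)}$ is \emph{larger} than the one in \eqref{e:inot} for slowly growing $w$.
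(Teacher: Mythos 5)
Your reduction to blocks $a_k$ with $\|a_k\|_\infty\lesssim 1$ and the heuristic that only $\log w(\delta)$ ``effective steps'' should enter the LIL is exactly the right intuition, and it is the same intuition behind the paper's argument. But the step where you pass to a rescaled halfspace $s=\log w(\delta)$ and invoke the Makarov--Llorente LIL for Bloch functions does not work, and the paper's introduction explicitly flags this: the Bloch-function LIL cannot be used here for slowly growing $w$. Concretely: (i) the change of the vertical variable alone does not preserve harmonicity, and the alternative ``Poisson-extend $\sum_{k\le N}a_k$'' does not define a single harmonic function, since that boundary data depends on $N$ and in general diverges as $N\to\infty$; the asserted $O(1)$ closeness of such an $F$ to $I_0$ at the matching heights is exactly the nontrivial point and is nowhere established. (ii) More fatally, the horizontal Bloch bound you claim is false for the natural candidate $F(x,s)=\int_{t(s)}^1 u(x,y)\,d(1/w(y))$: the best available estimate is $|\nabla_x F(x,s)|\lesssim\int_{t(s)}^1\frac{w(y)}{y}\,d(1/w(y))\lesssim\frac1{t(s)}$, and the function genuinely oscillates in $x$ at the \emph{original} spatial scale $t(s)$. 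In the rescaled halfspace (height $\sim 2^{-s}\sim 1/w(t(s))$) the Bloch condition would require $|\nabla_x F|\lesssim w(t(s))$, and for a slow weight such as $w(y)=\log\log\frac{e}{y}$ one has $1/t(s)$ doubly exponentially larger than $w(t(s))$. So there is no Bloch harmonic function in the new coordinates to which the Makarov--Llorente LIL could be applied. (A smaller issue: doubling \eqref{e:wdc} does not give the pointwise bound $y|w'(y)|/w(y)\lesssim1$ that you use.)

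The repair is to abandon halfspace structure in the rescaled variable and keep the probabilistic structure only: this is what the paper does (its Theorem \ref{t:th1} with $\phi\equiv0$ contains the statement, which itself is quoted from \cite{EML:13}, where it was proved by wavelet methods). One forms $H(x,t)=\int_0^1u(x,t+y)\,d(1/w(y))$, shows it is Bloch in the \emph{original} halfspace with $|H(x,\theta)-I_0(x,\theta)|\lesssim1$ (Lemma \ref{l:bl}), approximates $H$ by a dyadic martingale via Llorente's result (Proposition \ref{p:c2ll}), and then \emph{thins} this martingale to the superdyadic one $\Lambda_k=M_{\alpha_k}$, where $2^{-\alpha_k}\sim s_k$ and $w(s_k)=2^k$. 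The thinned martingale has uniformly bounded increments, hence quadratic function $\lesssim k\sim\log w(\delta)$, and the LIL for martingales (not for Bloch functions) gives $\sqrt{k\log\log k}$. The martingale filtration refines very rapidly in space (from rank $\alpha_k$ to rank $\alpha_{k+1}$), which is harmless for the martingale LIL but is precisely what has no counterpart as a Bloch function in a rescaled halfspace; this is the missing idea in your proposal.
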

Our main goal is to extend this result to the domains in $\mathbb{R}^{n+1}_+$ above the graphs of Lipschitz functions.
\begin{theorem}\label{t:th1}
Let $\phi$ be a Lipschitz function on $\mathbb{R}^n$ and let $u$ be a function in $h^{\infty}_w(\Omega_{\phi})$. For $x\in\mathbb{R}^n$ and $0<\delta\leq1$ put 
\begin{equation}\label{e:idd}
I(x,\delta) = \int_{\delta}^1u(x,\phi(x)+y)\,d\left(\frac{1}{w(y)}\right).
\end{equation}
Then the following LIL holds
\begin{equation}\label{e:alil}
\limsup_{\delta\rightarrow0}\frac{I(x,\delta)}{\sqrt{\log w(\delta)\log\log\log w(\delta)}} \leq C\|u\|_{w,\infty},\quad a.e.\, x\in\mathbb{R}^n,
\end{equation}
where $C$ depends only on the function $\phi$, weight $w$ and dimension $n$.
\end{theorem}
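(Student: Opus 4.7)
The plan is to follow the strategy sketched in the abstract: to associate with $u$ a function $B$ on $\Omega_\phi$ (in graph-normalized coordinates) whose boundary trace reproduces $I(x,\delta)$, to verify that $B$ is Bloch with an additional Littlewood--Paley bound driven by the weight $w$, and to apply a refined law of the iterated logarithm adapted to such Bloch functions.

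The candidate for $B$ is the obvious one,
\[
B(x,y) \;=\; \int_y^1 u(x,\phi(x)+t)\, d(1/w(t)),
\]
so that $B(x,\delta) = I(x,\delta)$ by construction. Doubling of $w$ supplies $y|w'(y)|/w(y) \le C$, and coupled with the hypothesis $|u(x,\phi(x)+y)| \le \|u\|_{w,\infty}\,w(y)$ this gives $|\partial_y B(x,y)| \le C\|u\|_{w,\infty}/y$. The Lipschitz character of $\phi$ together with the interior gradient estimate $|\nabla u(x,\phi(x)+t)| \lesssim \|u\|_{w,\infty}\, w(t)/t$, valid because $\dist((x,\phi(x)+t),\partial\Omega_\phi) \asymp_\phi t$, yields the matching horizontal bound $|\partial_{x_i}B(x,y)| \le C(\phi)\|u\|_{w,\infty}/y$. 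So $B$ is a Bloch function in the variables $(x,y)$, with norm controlled by $\|u\|_{w,\infty}$.

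The gain over the generic Bloch LIL comes from the sharper vertical Littlewood--Paley bound
\[
\int_\delta^1 |\partial_y B(x,y)|^2\, y\, dy \;\le\; C\,\|u\|_{w,\infty}^2\,\log w(\delta),
\]
which follows by writing $|\partial_y B| = |u(x,\phi(x)+y)|\cdot|w'(y)|/w(y)^2 \le \|u\|_{w,\infty}|w'(y)|/w(y)$, absorbing one power into the bound $y|w'|/w\le C$, and integrating the remaining $|w'|/w$ from $\delta$ to $1$. Thus $B$ belongs to the proper subclass of Bloch functions whose vertical square function along $x$-rays grows like $\log w$ rather than the generic $\log(1/\delta)$.

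The final step converts this bound into the advertised LIL. Discretize vertically on $y_k = 2^{-k}$, average $B(\,\cdot\,,y_k)$ over the dyadic cube of side $\sim y_k$ containing $x$ to kill horizontal oscillations (the Bloch bound of the second paragraph controls the replacement error uniformly), and organize the resulting sequence into a martingale with respect to the dyadic filtration on $\mathbb{R}^n$. The increments of this martingale are bounded in $L^\infty$ by a multiple of $\|u\|_{w,\infty}$, while its conditional quadratic variation is majorized, after averaging in $x$, by the vertical square function above; the sharp exponential martingale inequality (Freedman--Stout) then delivers the desired $\sqrt{\log w(y_k)\,\log\log\log w(y_k)}$ bound almost surely in $x$, and the Bloch estimate interpolates to all $\delta \in (0,1]$. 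The Lipschitz hypothesis on $\phi$ enters here through Dahlberg's theorem, which lets the a.s.\ statement on the dyadic filtration of $\partial\Omega_\phi$ transfer to a.e.\ $x \in \mathbb{R}^n$. The technical core, and the main obstacle, is precisely the construction of this martingale approximation in the Lipschitz setting: because $u(x,\phi(x)+\cdot)$ is no longer harmonic in the pull-back coordinates, the Poisson-type vertical averaging of \cite{EML:13} is unavailable and must be replaced by one intrinsic to $\Omega_\phi$ (via boundary Harnack or Dahlberg-type kernel estimates) while preserving both the Bloch bound and the refined square-function control established in the previous steps.
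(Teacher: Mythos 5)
There are genuine gaps, and they sit exactly at the points your sketch defers. First, your function $B(x,y)=\int_y^1 u(x,\phi(x)+t)\,d(1/w(t))$ is not harmonic (the composition with the Lipschitz $\phi$ destroys harmonicity), so no Bloch--martingale correspondence applies to it, and you yourself flag the construction of the martingale approximation as ``the technical core, and the main obstacle'' without resolving it. The paper resolves it differently: it sets $H(x,t)=\int_0^1 u(x,t+y)\,d(1/w(y))$, which \emph{is} harmonic in $\Omega_\phi$ (a graph domain is invariant under upward translation, so this is an average of harmonic functions), proves $\|H\|_{\mathcal B}\lesssim 1$ and $|H(x,\phi(x)+\theta)-I(x,\theta)|\lesssim 1$, and then imports the whole Bloch-to-dyadic-martingale step on the Lipschitz domain from Llorente's Corollary 2 (Proposition \ref{p:c2ll}), where the measure $\omega$ is absolutely continuous with respect to Lebesgue measure (this is where the Dahlberg-type input lives); no new intrinsic averaging needs to be invented. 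Your claim that dyadic-cube averages of $B(\cdot,y_k)$ ``organize into a martingale'' is precisely what needs proof and is not automatic, since the function being averaged changes with the level.

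Second, your mechanism for the weight-dependent gain does not work as stated. The bound $y|w'(y)|/w(y)\le C$ does not follow from doubling (doubling only gives $\log w(y)-\log w(2y)\le\log D$, an integrated bound), and more importantly the step ``its conditional quadratic variation is majorized, after averaging in $x$, by the vertical square function'' is unjustified: the increments of a dyadic martingale at level $k$ encode \emph{horizontal} oscillation of the boundary averages at scale $2^{-k}$, which the purely vertical Littlewood--Paley estimate does not control, and an averaged ($L^1$ in $x$) bound on the square function is in any case insufficient for the martingale LIL, which requires pointwise a.e.\ control of $\langle\Lambda\rangle_m$. This is essentially the area-integral route (Ba\~nuelos--Moore) that the paper explicitly says it could not push through. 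The paper's substitute is the superdyadic thinning: choose $s_k$ with $w(s_k)=2^k$ and $\alpha_k\sim\log_2(1/s_k)$, set $\Lambda_k=M_{\alpha_k}$, and observe that $|I(x,s_k)-I(x,s_{k+1})|\le 2$ together with $|M_{\alpha_k}-I(\cdot,s_k)|\lesssim 1$ forces bounded increments, hence $\langle\Lambda\rangle_m^2\lesssim m\sim\log w(s_m)$ with no square-function estimate at all; the martingale LIL then gives \eqref{e:alil}. With the full dyadic filtration and $y_k=2^{-k}$, as in your plan, the increments are only $O(1)$ per level and you would recover $\log(1/\delta)$, not $\log w(\delta)$ — the thinning matched to $w$ is the essential idea your argument is missing.
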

We see that the condition $u \in h^{\infty}_w(\mathbb{R}^{n+1}_+)$ implies immediately that $I_0(x,\delta) \leq C\log w(\delta)$. In addition, the weighted average $I_0$ enjoys some nice cancellation properties, so, with the help of the Law of Iterated Logarithm (LIL) techniques, a better estimate \eqref{e:inot} was obtained. In doing this we relied heavily on the wavelet decomposition of $u(\cdot,\delta)$. Unfortunately these methods work only on nice domains (such as the disc or the upper halfspace), and since we need to extend this result to domains  $\Omega_{\phi}$, we have to use a different approach. This approach is based on the ideas by J. Llorente and A. Nicolau, and in the simplest case of the upper halfplane it goes as follows. First we approximate $I_0$ by a Bloch function $H$ that also happens to belong to $h^{\infty}_{\log w}(\mathbb{R}^2_+)$. Recall that $H$ is a Bloch function in $\mathbb{R}^2_+$ if it is harmonic there and $|\nabla H|(x,y) \leq \frac{C}{y}$ for $(x,y)\in \mathbb{R}^2_+$. Here we would like to mention two of the LILs for harmonic functions, namely the Makarov-Llorente LIL for the Bloch functions, \cite[Corollary \textbf{3.2}]{Mak} \cite[Theorem \textbf{1}]{Ll:98}, and the LIL of Ba\~{n}uelos-Moore, \cite[Theorem \textbf{3.04}]{BM:98}. Unfortunately, we could not use either of those directly, since the former does not provide the desired estimate for slow growing weights $w$, and the latter involves the Lusin area integral, which, we believe, can not be properly estimated by the weight. Therefore we modify the ideas used in the proof of those LILs, so we proceed by approximating $H$ by a (super)dyadic martingale and estimating its quadratic function by $w$. Then it remains to apply the LIL for the martingales.\\ We would also like to note that Theorem \ref{t:th1} remains true if we replace $\Omega_{\phi}$ with some star-like Lipschitz domain.\par
As a corollary of Theorem \ref{t:th1} we have the local version of Theorem \textbf{5} from \cite{EML:13}
\begin{theorem}\label{t:th2}
Let $u$ be a harmonic function in $\mathbb{R}^{n+1}_+$. Assume that there exists a set $\Sigma\subset\mathbb{R}^n$ of positive $n$-dimensional Lebesgue measure such that for every $x_0\in \Sigma$
\begin{equation}\label{e:cgr}
|u(x,y)| \leq Kw(y),\quad |x-x_0|\leq My,
\end{equation}
where $M$ is some positive constant. Then
\begin{equation}\label{e:th2st}
\limsup_{\delta\rightarrow0}\frac{I_0(x,\delta)}{\sqrt{\log w(\delta)\log\log\log w(\delta)}} \leq CK,\quad a.e.\, x\in\Sigma,
\end{equation}
where the constant $C$ depends only on $M$, $w$ and $n$.
\end{theorem}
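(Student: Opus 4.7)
The strategy is to deduce Theorem~\ref{t:th2} directly from Theorem~\ref{t:th1} by localization: build a Lipschitz graph $\phi\geq 0$ tailored to $\Sigma$ so that the cone-type hypothesis \eqref{e:cgr} on $u$ upgrades to membership in $h^{\infty}_w(\Omega_{\phi})$, then invoke Theorem~\ref{t:th1} on $\Omega_{\phi}$ and read off the conclusion on $\Sigma$.

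More precisely, it suffices to establish \eqref{e:th2st} almost everywhere on $\Sigma\cap B$ for each ball $B$ in a countable basis of $\mathbb{R}^n$, since these cover $\Sigma$. Fix such a $B$ with $E := \Sigma\cap B\neq\emptyset$ and set
\begin{equation}\notag
\phi(x) \;=\; \frac{1}{M}\dist(x,E), \qquad x\in\mathbb{R}^n.
\end{equation}
Then $\phi$ is a non-negative Lipschitz function with constant $1/M$, vanishes on $\overline{E}$, so $\Omega_{\phi}$ is a Lipschitz domain contained in the open upper half-space, and $\phi\equiv 0$ on $E$. I would next verify that $u|_{\Omega_{\phi}}\in h^{\infty}_w(\Omega_{\phi})$ with norm at most $K$: for any $(x,y)$ in the interior of $\Omega_{\phi}$, choose $x^{*}\in E$ realising $|x-x^{*}|=\dist(x,E)=M\phi(x)\leq My$; applying \eqref{e:cgr} with $x_0=x^{*}\in\Sigma$ gives $|u(x,y)|\leq Kw(y)$, and since $w$ is decreasing and $\dist((x,y),\partial\Omega_{\phi})\leq y$, this upgrades to $|u(x,y)|\leq Kw(\dist((x,y),\partial\Omega_{\phi}))$.

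Applying Theorem~\ref{t:th1} to $u$ on $\Omega_{\phi}$ then yields, for almost every $x\in\mathbb{R}^n$,
\begin{equation}\notag
\limsup_{\delta\to 0}\frac{\int_{\delta}^{1}u(x,\phi(x)+y)\,d(1/w(y))}{\sqrt{\log w(\delta)\log\log\log w(\delta)}}\;\leq\; CK,
\end{equation}
with $C$ depending only on the Lipschitz constant of $\phi$ (hence only on $M$), on $w$, and on $n$. For $x\in E$ we have $\phi(x)=0$, so the numerator is exactly $I_0(x,\delta)$; this establishes \eqref{e:th2st} a.e.\ on $E$, and a countable union over $B$ completes the proof.

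The only step that requires genuine care is the transfer of hypotheses. The specific form $\phi(x)=(1/M)\dist(x,E)$ is engineered precisely so that every $(x,y)\in\Omega_{\phi}$ automatically sits in the admissible cone $|x-x^{*}|\leq My$ over some $x^{*}\in E\subset\Sigma$, providing a uniform growth bound on $u$ throughout the (unbounded) domain $\Omega_{\phi}$ without any further truncation. Once this is in place, Theorem~\ref{t:th1} carries the entire analytic content, and no new oscillation, Bloch, or martingale estimate is needed.
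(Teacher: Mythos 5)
Your proposal is correct and is essentially the paper's own argument: the paper's ``ice-cream cone construction'' $\Omega=\bigcup_{x_0\in\Sigma}\Gamma(x_0,M)$ is exactly the region above the graph of $\phi=\frac{1}{M}\dist(\cdot,\Sigma)$, and both proofs then verify $u\in h^{\infty}_w(\Omega_{\phi})$ (using $\dist((x,y),\partial\Omega_{\phi})\leq y$ and the monotonicity of $w$) and apply Theorem \ref{t:th1}, reading off $I=I_0$ on $\Sigma$ where $\phi$ vanishes. Your localization to balls is unnecessary, and the distance-realizing point $x^{*}$ need not lie in $E$ when $E$ is not closed, but since $\dist(x,E)<My$ strictly at interior points you may instead pick $x^{*}\in E$ with $|x-x^{*}|<My$, so this is only a cosmetic adjustment.
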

Note that the condition \eqref{e:cgr} restricts the \textit{non-tangential} growth of $u$ near the boundary. We do not know if this result remains true if we replace \eqref{e:cgr} by a \textit{radial} growth condition.\par
The plan of the paper is as follows: in Section \ref{s:not} we collect some notation and known results about dyadic martingales, and in Sections \ref{ss:ml}-\ref{ss:dma} we prove Theorem \ref{t:th1}. The proof itself consists of three parts. In Section \ref{ss:hir} we approximate $I$ by a Bloch function $H$ that belongs to $h^{\infty}_{\log w}(\Omega_{\phi})$. Then, in Section \ref{ss:dma}, we approximate $H$ by a superdyadic martingale. After that we can apply the LIL for martingales to finish the proof (Section \ref{ss:mtprf}). Theorem \ref{t:th2} is proven in Section \ref{ss:crlr}. In Section \ref{s:ex} we consider the question  whether \textit{every} Bloch function in $h^{\infty}_{\log w}(\Omega_{\phi})$ satisfies the LIL \eqref{e:alil}. Using the technique of dyadic martingales we construct an example of a Bloch function that provides the negative answer to this question.
\section{Notation}\label{s:not}
By $\lambda_n$ we denote the Lebesgue measure in $\mathbb{R}^n,\; n\in\mathbb{N}$.
Given a point $x = (x_1,x_2,\dots,x_n)\in \mathbb{R}^n$ and $r>0$ we denote by $Q(x,r)$ the cube of radius $r$ centered at $x$
\begin{equation}\notag
Q(x,r) = \prod_{i=1}^n(x_i-r,x_i+r],
\end{equation}
we also put $Q(x,\frac12) := Q(x)$. Further, given a cube $Q$ we denote its center by $x_Q$, so that $Q = Q(x_Q,r)$ for some positive $r$.\par Fix $x\in\mathbb{R}^n$. If $2^kx_i-\frac12\in\mathbb{Z}$ for every $i=1\dots n$, and $r = 2^{-k-1}$ for some $k\in \mathbb{Z}_+$, we call the cube $Q = Q(x,r)$ dyadic of rank $k$. For $x\in \mathbb{R}^n$ and $k\in\mathbb{Z}_+$ we denote by $\Delta_k(x)$ the collection of all (shifted) dyadic cubes of rank $k$ in $Q(x)$,
\begin{multline*}
\Delta_k(x)=\\ = \{\prod_{i=1}^n(x-\frac12+m_i2^{-k},x-\frac12+(m_i+1)2^{-k}),\; m_i\in\mathbb{Z}_+,\; 0\leq m_i \leq 2^{k}-1\},
\end{multline*}
we also put $\Delta(x) = \bigcup_{k=0}^{\infty}\Delta_k(x)$. If $Q(x) = (0,1]^n$, we write $\Delta_k$ and $\Delta$ respectively. By $\mathcal{F}_k(x)$ we denote the sigma-algebra generated by dyadic cubes of rank $k$ in $Q(x)$, 
\begin{equation}\notag
\mathcal{F}_k(x) = \sigma(\Delta_k).
\end{equation}
Given a probability Borel measure $\mu$ on $Q(x)$ and an increasing sequence $\{\alpha_k\}_{k=0}^{\infty}\subset \mathbb{Z}_+$ we can consider the (super)dyadic martingales on $Q(x)$ with respect to the filtration $\{\mathcal{F}_{\alpha_k}(x)\}_{k=0}^{\infty}$, they are usually denoted by $\Lambda = \{\Lambda_{k}, \mathcal{F}_{\alpha_k}(x),\mu\}$. This means that $\Lambda_{k}$ is a piecewise constant function on the (shifted) dyadic cubes of rank $\alpha_k$, and if $q$ is a dyadic cube in $\Delta_{\alpha_{k-1}}(x)$, then
\begin{equation}\notag
\frac{1}{\mu(q)}\int_{q}\Lambda_{k}(t)\,d\mu(t) = \Lambda_{k-1}(x_q).
\end{equation}
In particular, if $n=1$ and $\mu = \lambda_1$ is the Lebesgue measure on $Q\left(\frac{1}{2}\right) = (0,1]$ then $\Lambda$ has a following truncated wavelet representation
\begin{equation}\label{e:hpwr}
\Lambda_{k}(t) = L + \sum_{j=0}^{\alpha_k}\sum_{i=0}^{2^j-1}b_{ij}\psi_{ij}(t),\quad t\in (0,1],
\end{equation}
where $L = \mathbb{E}\Lambda_{k} = \int_{Q_0}\Lambda_k(t)\,d\lambda_1(t)$, $\psi_{ij}(t) = \psi(2^{j}t-i),\; t\in\mathbb{R}$ , and $\psi$ is the Haar wavelet, $\psi = \chi_{[0,1]} -2\chi_{[0,\frac12]}$ (instead of the usual $L^2$ scaling we use $L^{\infty}$ one here, it is more convenient for our purposes). For any interval $I\subset (0,1]$ of length $2r$, $I = [x_I-r,x_I+r]$, we put
\begin{equation}\notag
\psi_I(t) = \psi\left(\frac{t-x_I+r}{2r}\right),\quad t\in\mathbb{R}.
\end{equation}
Then \eqref{e:hpwr} can be written as follows
\begin{equation}\notag
\Lambda_{k}(t) = L + \sum_{j=0}^{\alpha_k}\sum_{I\in\Delta_{j}}b_I\psi_{I}(t),\quad t\in (0,1],
\end{equation}
where $b_I = \frac{1}{\lambda_1(I)}\int_{\mathbb{R}}\Lambda_k(t)\psi_I(t)$.\par
By $\langle\Lambda\rangle_k$ we denote the quadratic function of $\Lambda$,
\begin{equation}\notag
\langle\Lambda\rangle_k^2 = \sum_{j=1}^{k}\mathbb{E}[|\Lambda_{j}-\Lambda_{j-1}|^2\vert\mathcal{F}_{\alpha_{j-1}}].
\end{equation}
If $\alpha_k = k$ and we use the Haar representation of $\Lambda$, we can write the quadratic function in the following way
\begin{equation}\label{e:hmqf}
\langle\Lambda\rangle_k^2(t) = L^2 + \sum_{I\in\Delta:t\in I, \lambda_1(I)\geq 2^{-k}}b_I^2.
\end{equation}
Let $u$ be a harmonic function in $\Omega_{\phi}$. We say that $u$ belongs to the Bloch class in $\Omega_{\phi}$, if there exists a constant $D>0$  such that
\begin{equation}\notag
|\nabla u(x,y)| \leq \frac{D}{\dist((x,y),\partial \Omega_{\phi})},\quad (x,y) \in \Omega_{\phi}.
\end{equation}
We denote the space of such functions by $\mathcal{B}(\Omega_{\phi})$  and the smallest $D$ for which this inequality is satisfied by $\|u\|_{\mathcal{B}}$.\\
The connection between Bloch functions and dyadic martingales is well established, see, for example, \cite{Mak} for the unit disc case and \cite{Ll:98} for Lipschitz domains. Here, however, we use a superdyadic martingale, which is, essentially, a thinned dyadic martingale. It means that instead of the usual dyadic filtration $\mathcal{F}_k$ we use some subsequence of dyadic sigma-algebras $\mathcal{F}_{\alpha_k}$ where $\alpha_k$ depends on the weight $w$ (and is lacunary for slow growing $w$). The main reason for the transition from the dyadic to the superdyadic martingale approximation here is that the quadratic function of the superdyadic martingale is much easier to estimate (similar ideas were used in \cite{LM:12}).
\section{Proof of Theorems \ref{t:th1} and \ref{t:th2}}\label{s:prf}
\subsection{Main approximation lemma}\label{ss:ml}
Fix a Lipschitz function $\phi:\mathbb{R}^n\mapsto\mathbb{R}$, a doubling weight $w$ and a function $u$ in $h^{\infty}_w(\Omega_\phi)$. 
Given two functions $f$ and $g$, we say that $f\lesssim g$ if there is a positive constant $C = C(w,n,\|\phi'\|_{\infty}, \|u\|_{w,\infty})$ such that $f\leq Cg$. We write $f\sim g$ if $f\lesssim g$ and $g\lesssim f$ simultaneously. Consider a positive decreasing sequence $\{s_k\}_{k=0}^{\infty}$ such that 
\begin{equation}\notag
w(s_k) = 2^k,\quad k\in \mathbb{Z}_+,
\end{equation}
and put
\begin{equation}\notag
\alpha_0 = 0,\;\alpha_k = -\left[\frac{\log s_k}{\log2}\right],\quad k\in\mathbb{N}.
\end{equation}
It follows from the doubling property \eqref{e:wdc} that $w(2^{-\alpha_k})\sim 2^{k}$. Consider $I(x,\delta)$ defined in \eqref{e:idd}.
The approximation of $I(x,\delta)$ by martingales is provided by the following lemma
\begin{lemma}\label{l:ml}
Assume that $u \in h^{\infty}_w(\Omega_{\phi})$. Then for every $x_0\in\mathbb{R}^n$ there exists a probability measure $\mu$ on $Q(x_0)$ and a (super)dyadic martingale $\Lambda = \{\Lambda_k, \mathcal{F}_{\alpha_k},\mu\}_{k=0}^{\infty}$ on $Q(x_0)$ such that $\mu$ is absolutely continuous with respect to the Lebesgue measure on $Q(x_0)$ and for every $k\in\mathbb{Z}_+$
\begin{subequations}
\begin{eqnarray}
\label{e:li.1}&|\Lambda_k(x) - I(x,s_k)| \lesssim 1,\\
\label{e:li.2}&|\Lambda_k(x) - \Lambda_{k+1}(x)| \lesssim 1,\quad x\in Q(x_0).
\end{eqnarray}   
\end{subequations}
\end{lemma}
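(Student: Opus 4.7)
The plan is to prove the lemma in two conceptually distinct stages, linked by a harmonic Bloch function $H$ on $\Omega_{\phi}$ that plays the role of a bridge between the vertical average $I$ and the sought martingale $\Lambda$.

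In the first stage (to be carried out in Section~\ref{ss:hir}), I would construct a harmonic function $H$ on $\Omega_{\phi}$ belonging to $\mathcal{B}(\Omega_{\phi}) \cap h^{\infty}_{\log w}(\Omega_{\phi})$ with both norms $\lesssim 1$, and satisfying
\begin{equation*}
|H(x, \phi(x) + y) - I(x, y)| \lesssim 1, \qquad x \in \mathbb{R}^n, \; y \in (0, 1].
\end{equation*}
The heuristic is that $I$ itself behaves in Bloch fashion: its increments between heights $s_{k+1}$ and $s_k$ are bounded because $|u| \leq K w$ while $d(1/w)$ has total mass $\sim 2^{-k}$ on that interval, and similarly the $x$-derivative of $u(x,\phi(x)+y)$ obeys a harmonic gradient estimate $\lesssim w(y)/y$. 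A natural candidate is a harmonically corrected version of the non-harmonic expression $(x,y) \mapsto \int_{y-\phi(x)}^{1} u(x,\phi(x)+t)\, d(1/w(t))$; the Bloch bound $|\nabla H| \lesssim \dist(\cdot,\partial\Omega_{\phi})^{-1}$ then follows from interior gradient estimates applied on Whitney pieces of $\Omega_{\phi}$, combined with $|u|\le Kw$ and the doubling of $w$.

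Given $H$, the second stage (Section~\ref{ss:dma}) is more mechanical. I would take $\mu$ to be normalized Lebesgue measure on $Q(x_0)$ and set
\begin{equation*}
g_k(Q) := H(x_Q, \phi(x_Q) + s_k), \qquad Q \in \Delta_{\alpha_k}(x_0).
\end{equation*}
Because $w(2^{-\alpha_k}) \sim 2^k$, the point $(x_Q, \phi(x_Q)+s_k)$ lies within bounded hyperbolic distance of every $(x, \phi(x)+s_k)$ with $x \in Q$, and the Bloch property of $H$ gives $|g_k(Q) - H(x, \phi(x)+s_k)| \lesssim 1$ on $Q$. The piecewise-constant function $x \mapsto g_k(Q(x))$ need not be a martingale, but one corrects it recursively: put $\Lambda_0 \equiv g_0$, and for $Q \in \Delta_{\alpha_k}$ with children $Q' \in \Delta_{\alpha_{k+1}}$ set
\begin{equation*}
\Lambda_{k+1}\big|_{Q'} \;=\; \Lambda_k\big|_{Q} \;+\; g_{k+1}(Q') \;-\; \frac{1}{\mu(Q)}\sum_{Q'' \subset Q} \mu(Q'')\, g_{k+1}(Q'').
\end{equation*}
By construction $\Lambda$ is a martingale adapted to $\{\mathcal{F}_{\alpha_k}\}$, and the corrections are $O(1)$ because the Bloch estimate controls the oscillation of $g_{k+1}$ across the children of $Q$. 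A triple triangle inequality then yields \eqref{e:li.1}, and \eqref{e:li.2} follows directly from the $O(1)$ bound on the inductive correction.

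The main obstacle is clearly the first stage: producing one harmonic function $H$ that is simultaneously Bloch, grows no faster than $\log w$ near the boundary, and tracks the genuinely non-harmonic vertical average $I$ to within an additive constant, all inside a Lipschitz domain where explicit Poisson-type kernels are unavailable. It is here that the doubling of $w$ and the Lipschitz bound on $\phi$ enter essentially, through a Whitney decomposition of $\Omega_{\phi}$ and careful control of the harmonization error across adjacent Whitney cubes; the lacunarity of $\alpha_k$ for slowly growing $w$ is precisely what keeps this error bounded as $k \to \infty$.
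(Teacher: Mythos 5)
Your overall architecture (a Bloch intermediary $H$ close to $I$, then a martingale approximation of $H$) is the same as the paper's, but both stages as you present them have real gaps. In Stage 1 you never actually produce $H$: a ``harmonically corrected version'' of the non-harmonic expression $\int_{y-\phi(x)}^{1}u(x,\phi(x)+t)\,d(1/w(t))$, patched via Whitney pieces, is not a construction, and the whole correction machinery is unnecessary. The paper simply sets $H(x,t)=\int_0^1u(x,t+y)\,d\bigl(\tfrac{1}{w(y)}\bigr)$ as in \eqref{e:had}; this is automatically harmonic in $\Omega_\phi$ (an average of vertical translates of $u$), and the Bloch bound and the estimate \eqref{e:hia} follow from the interior gradient bound $|\nabla u|(x,\phi(x)+\theta)\lesssim w(\theta)/\theta$ together with the doubling of $w$ (Lemma \ref{l:bl}). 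Identifying this exact averaging is the key idea you are missing in Stage 1.

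Stage 2 is where the argument actually breaks. First, your justification that ``the Bloch estimate controls the oscillation of $g_{k+1}$ across the children of $Q$'' is false in the relevant regime: a child $Q''\in\Delta_{\alpha_{k+1}}$ of $Q\in\Delta_{\alpha_k}$ has side $2^{-\alpha_{k+1}}$, while two children of the same $Q$ can be at horizontal distance $\sim 2^{-\alpha_k}$; at height $s_{k+1}$ the Bloch property only gives an oscillation of order $\alpha_{k+1}-\alpha_k$, which is unbounded precisely for the slowly growing weights this paper is about (that is why the martingale is superdyadic). One can repair the per-step bound using $|I(\cdot,s_k)-I(\cdot,s_{k+1})|\lesssim 1$ (which comes from $|u|\le Kw$, not from Blochness), but a worse problem remains: your recursion gives $\Lambda_{k+1}-g_{k+1}=\Lambda_k-g_{k+1}^{avg}$ on each child, so the error $\Lambda_k-g_k$ is the sum over $j<k$ of the terms $g_j(Q_j(x))-g_{j+1}^{avg}(Q_j(x))$, each merely $O(1)$ and with no cancellation mechanism supplied. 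As written you only get $|\Lambda_k-I(\cdot,s_k)|=O(k)$, which is useless for \eqref{e:li.1} (and hence for the LIL); \eqref{e:li.2} survives, but it is \eqref{e:li.1} that carries the content. The uniform-in-$k$ approximation of a Bloch function by a martingale in a Lipschitz domain is exactly the nontrivial input, and the paper does not reprove it: it invokes Llorente's result (Proposition \ref{p:c2ll}, Corollary 2 of \cite{Ll:98}), which produces a dyadic martingale $\{M_k,\mathcal F_k(x_0),\omega\}$ with respect to a measure $\omega$ adapted to the harmonic structure of the sawtooth region $\Omega_1$ (absolutely continuous, but not Lebesgue) so that $|M_k-H(\cdot,\phi(\cdot)+A2^{-k})|\le C\|H\|_{\mathcal B}$ holds uniformly in $k$; the paper then merely thins it, $\Lambda_k:=M_{\alpha_k}$, and deduces \eqref{e:li.1}--\eqref{e:li.2} by triangle inequalities using \eqref{e:hia}, $\int_{s_k}^{A2^{-\alpha_k}}|\nabla H|\lesssim 1$, and $|I(\cdot,s_k)-I(\cdot,s_{k+1})|\lesssim 1$. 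To make your route work you would have to reprove this uniform approximation, which your recursive Lebesgue-measure averaging does not do.
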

\subsection{How to deduce Theorem \ref{t:th1}}\label{ss:mtprf}
Assuming that Lemma \ref{l:ml} holds, we proceed by the standard argument. Fix any $x_0\in \mathbb{R}^n$ and put 
\[E = \{x\in Q(x_0): \lim_{m\rightarrow\infty}|\langle\Lambda\rangle_m|(x) < \infty\}.
\]
The inequality \eqref{e:li.2} implies that $\langle\Lambda\rangle_m^2 \lesssim m,\; m\geq 1$. Applying the law of the iterated logarithm for martingales to $\Lambda$ (see, for example, Theorem \textbf{3.0.2} in \cite{BM:98}), we get
\begin{equation}\notag
\limsup_{m\rightarrow\infty}\frac{|\Lambda_m|(x)}{\sqrt{m\log\log m}} \lesssim 1\quad \;\mu\; a.e.\; x\in Q(x_0)\setminus E.
\end{equation}
It is well known that for $\mu$ almost every $x\in E$ the sequence $\{\Lambda_m(x)\}$ is bounded, so \eqref{e:li.1} implies that the sequence $\{I(x,s_m)\}$ is bounded $\mu\; a.e.$ on $E$ as well. It follows that
\begin{equation}\label{e:ilil}
\limsup_{m\rightarrow\infty}\frac{|I(x,s_m)|}{\sqrt{m\log\log m}} \lesssim 1\quad \mu\; a.e.\; x\in Q(x_0).
\end{equation}
Now for $s_{m}\leq \delta\leq s_{m-1}$ and $x\in Q(x_0)$ we have
\begin{multline*}
\left|I(x,s_m) - I(x,\delta)\right| \leq \int_{s_m}^{\delta}|u(x,\phi(x)+y)|\,d\left(\frac{1}{w(y)}\right)\\ \lesssim \log w(s_m) - \log w(s_{m-1}) = 1,
\end{multline*}
also, clearly, $w(\delta) \geq w(s_{m-1}) = \frac{1}{2}w(s_m)$. Combined with \eqref{e:ilil} and the fact that $\mu$ is absolutely continuous with respect to the Lebesgue measure, it gives us
\begin{equation}\notag
\limsup_{\delta\rightarrow\infty}\frac{|I(x,\delta)|}{\sqrt{\log w(\delta)\log\log\log w(\delta)}} \lesssim 1, \quad a.e.\; x\in Q(x_0).
\end{equation}
The inequality \eqref{e:alil} follows immediately.
\subsection{Proof of the lemma \ref{l:ml}: auxiliary function $H$}\label{ss:hir}
The approximation of $I(x,\theta)$ by a Bloch function is covered by the following lemma
\begin{lemma}\label{l:bl}
Assume that $u \in h^{\infty}_w(\Omega_{\phi})$. Put
\begin{equation}\label{e:had}
H(x,t) = \int_{0}^1u(x,t+y)\,d\left(\frac{1}{w(y)}\right),\quad (x,t)\in \Omega_{\phi}.
\end{equation}
Then $H$ belongs to $\mathcal{B}(\Omega_{\phi})$ and $\|H\|_{\mathcal{B}}\lesssim 1$. Moreover
\begin{equation}\label{e:hia}
|H(x,\phi(x)+\theta) - I(x,\theta)| \lesssim 1,\quad x\in\mathbb{R}^n, 0<\theta\leq 1.
\end{equation}
\end{lemma}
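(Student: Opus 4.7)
The lemma consists of two independent estimates: the Bloch bound $\|H\|_\mathcal{B} \lesssim 1$ and the pointwise comparison $|H(x,\phi(x)+\theta)-I(x,\theta)| \lesssim 1$. In both, the Lipschitz property of $\phi$ enters only through the standard equivalence $\dist((x,s),\partial\Omega_\phi) \sim s-\phi(x)$ for $(x,s)\in\Omega_\phi$, so the argument is effectively one-dimensional in the vertical variable, and the main analytic input is the doubling condition on $w$.

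For the Bloch bound, I differentiate under the integral sign to get $\nabla H(x,t) = \int_0^1 \nabla u(x,t+y)\,d(1/w(y))$. Setting $\tau = t - \phi(x)$ and using the interior gradient estimate $|\nabla u(x,t+y)| \lesssim w(\tau+y)/(\tau+y)$, I split at $y=\tau$: the contribution of $[0,\tau]$ is at most $(w(\tau)/\tau)\int_0^\tau d(1/w) = 1/\tau$; on $[\tau,1]$ I use $w(\tau+y)/(\tau+y) \leq w(y)/y$ and the identity $w(y)\,d(1/w(y)) = -d\log w(y)$ to reduce to $\int_\tau^1 -d\log w(y)/y$. This is the key technical integral: decomposing $[\tau,1]$ into dyadic annuli $[2^{-k-1},2^{-k}]$, doubling gives $\int(-d\log w) \leq \log D$ on each annulus while $1/y \leq 2^{k+1}$, so the annulus contribution is $\lesssim 2^{k+1}$ and the resulting geometric sum yields $\lesssim 1/\tau$. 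Altogether $|\nabla H(x,t)| \lesssim 1/\tau$.

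For the comparison, I substitute $v = 1/w(y)$ (with inverse $y(v)$) to pass from Stieltjes to Lebesgue integrals:
\begin{align*}
H(x,\phi(x)+\theta) &= \int_0^1 u(x,\phi(x)+\theta+y(v))\,dv,\\
I(x,\theta) &= \int_{1/w(\theta)}^1 u(x,\phi(x)+y(v))\,dv.
\end{align*}
Their difference splits into an ``extra mass'' piece $\int_0^{1/w(\theta)} u(x,\phi(x)+\theta+y(v))\,dv$ and a ``matching'' piece $\int_{1/w(\theta)}^1 [u(x,\phi(x)+\theta+y(v))-u(x,\phi(x)+y(v))]\,dv$. The first is trivial: for $v \leq 1/w(\theta)$ one has $y(v) \leq \theta$, the argument of $u$ lies at vertical distance $\sim\theta$ from $\partial\Omega_\phi$, so $|u| \lesssim w(\theta)$ and the integral is $\lesssim w(\theta)\cdot 1/w(\theta) = 1$. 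For the second, the mean value theorem and the interior bound $|\partial_tu(x,\phi(x)+s)| \lesssim w(s)/s$ produce an integrand bounded by $\theta\,w(y(v))/y(v)$; changing back to the $y$-variable reduces the estimate to $\theta\int_\theta^1 -d\log w(y)/y \lesssim \theta\cdot(1/\theta) = 1$, by the very same dyadic-doubling argument.

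The main obstacle is thus the single technical estimate $\int_\tau^1 -d\log w(y)/y \lesssim 1/\tau$, which underpins both halves of the lemma: a crude bound on $\log w$ produces a spurious factor $\log w(\tau)$, and one must genuinely exploit the doubling property \eqref{e:wdc} on each dyadic scale. Everything else is routine manipulation of the boundary terms at $y = 0, 1$ in the Stieltjes integrals, together with standard interior gradient estimates for harmonic functions in the Lipschitz domain $\Omega_\phi$.
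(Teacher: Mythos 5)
Your argument is correct and follows essentially the same route as the paper: the interior gradient estimate $|\nabla u|(x,\phi(x)+s)\lesssim w(s)/s$, the split of the $d(1/w)$ integral at the height of the point, the dyadic-annuli use of the doubling condition to get $\int_\theta^1 \frac{w(y)}{y}\,d\bigl(\frac{1}{w(y)}\bigr)\lesssim \frac1\theta$, and the same two-piece decomposition of $H(x,\phi(x)+\theta)-I(x,\theta)$. The only cosmetic differences are your substitution $v=1/w(y)$ (the paper keeps the Stieltjes form) and invoking the gradient bound for $u$ as a standard interior estimate rather than deriving it via Harnack as the paper does; neither affects the substance.
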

\begin{proof}
First we note that $H$ is harmonic in $\Omega_{\phi}$ (it is the average of harmonic functions). We proceed by proving the following inequality
\begin{equation}\label{e:gru}
|\nabla u|(x,\phi(x)+\theta) \lesssim \frac{w(\theta)}{\theta},\quad x\in\mathbb{R}^n,\; \theta >0.
\end{equation}
Fix any positive $\theta$. Since $\phi$ is a Lipschitz function, we see that $\dist((x,\theta+\phi(x)),\partial\Omega_{\phi})\sim \theta$ for any $x\in\mathbb{R}^n$ and positive $\theta$. It follows from \eqref{e:wdc} that for $y\geq \frac{\theta}{2}$ we have
\begin{equation}
|u(x,\phi(x)+y)| \leq w(\dist(x,y+\phi(x)),\partial\Omega_{\phi})) \lesssim w\left(\frac{\theta}{2}\right),
\end{equation}
so there exists a constant $C = C(n,\phi,w)$ such that
\begin{equation*}
0\leq u(x,y) + Cw\left(\frac{\theta}{2}\right) \leq (C+1)w\left(\frac{\theta}{2}\right),\quad (x,y)\in \Omega_{\phi+\frac{\theta}{2}}.
\end{equation*}
Then the estimate \eqref{e:gru} follows from \eqref{e:0}, \eqref{e:wdc}, and the Harnack inequality.
For $(x,\theta)\in \Omega_{\phi}$ \eqref{e:gru} implies
\begin{multline*}
\left|\nabla H\right|(x,\phi(x) + \theta) \leq \int_{0}^1|\nabla u|(x,\phi(x) + \theta+y)\,d\left(\frac{1}{w(y)}\right)\\ \lesssim \int_{0}^1\frac{w(\theta+y)}{\theta+y}\,d\left(\frac{1}{w(y)}\right)   =
\int_{0}^{\theta}\frac{w(\theta+y)}{\theta+y}\,d\left(\frac{1}{w(y)}\right) + \int_{\theta}^1\frac{w(\theta+y)}{\theta+y}\,d\left(\frac{1}{w(y)}\right).
\end{multline*}
Since the function $\frac{w(y)}{y}$ is decreasing, we have
\begin{equation}\notag
\int_{0}^{\theta}\frac{w(\theta+y)}{\theta+y}\,d\left(\frac{1}{w(y)}\right) \leq \int_{0}^{\theta}\frac{w(\theta)}{\theta}\,d\left(\frac{1}{w(y)}\right) = \frac{1}{\theta}.
\end{equation}
On the other hand, 
\begin{multline}\label{e:intm}
\int_{\theta}^1\frac{w(\theta+y)}{\theta+y}\,d\left(\frac{1}{w(y)}\right) \leq \int_{\theta}^1\frac{w(y)}{y}\,d\left(\frac{1}{w(y)}\right)\\ \leq 
\sum_{k=0}^{[\log\frac{1}{\theta}]}\int_{2^k\theta}^{2^{k+1}\theta}\frac{1}{y}d\log\frac{1}{w(y)} \leq \sum_{k=0}^{[\log\frac{1}{\theta}]}\frac{1}{2^{k}\theta}\int_{2^k\theta}^{2^{k+1}\theta}d\log\frac{1}{w(y)} \\ \leq
\frac{1}{\theta}\sum_{k=0}^{[\log\frac{1}{\theta}]}2^{-k}(\log w(2^k\theta) - \log w(2^{k+1}\theta))\\ \leq \frac{1}{\theta}\sum_{k=0}^{[\log\frac{1}{\theta}]}2^{-k}(\log \left(Dw(2^{k+1}\theta)\right) - \log w(2^{k+1}\theta)) \\ \leq \frac{\log D}{\theta}\sum_{k=0}^{[\log\frac{1}{\theta}]}2^{-k} \lesssim \frac{1}{\theta}.
\end{multline}
Gathering the estimates, we arrive at
\begin{equation}\notag
\left|\nabla H\right|(x,\phi(x)+\theta) \lesssim \frac{1}{\theta} \sim \frac{1}{\dist((x,\phi(x)+\theta), \partial\Omega_{\varphi})},
\end{equation}
and we get the first part of the lemma.\par
To prove \eqref{e:hia} we write
\begin{multline*}
H(x,\phi(x)+\theta) - I(x,\theta)\\ = \int_{0}^1u(x,\phi(x) + \theta+y)\,d\left(\frac{1}{w(y)}\right) - \int_{\theta}^1u(x,y)\,d\left(\frac{1}{w(y)}\right)\\ = 
\int_{0}^{\theta}u(x,\phi(x) + \theta+y)\,d\left(\frac{1}{w(y)}\right)\\ + \int_{\theta}^1\left(u(x,\phi(x) + \theta+y) - u(x,\phi(x+y))\right)\,d\left(\frac{1}{w(y)}\right).
\end{multline*}
Following the same reasoning as above, we see that
\begin{equation}\notag
\left|\int_{0}^{\theta}u(x,\phi(x) + \theta+y)\,d\left(\frac{1}{w(y)}\right)\right| \leq w(\theta)\int_{0}^{\theta}\,d\left(\frac{1}{w(y)}\right) = 1.
\end{equation}
Again, \eqref{e:gru} implies that
\begin{multline*}
\left|\int_{\theta}^1\left(u(x,\phi(x) + \theta+y) - u(x,\phi(x+y))\right)\,d\left(\frac{1}{w(y)}\right)\right|\\ \leq \int_{\theta}^1\int_{y}^{y+\theta}|\nabla u|(x,\phi(x)+s)\,ds\,d\left(\frac{1}{w(y)}\right)\\\leq
\int_{\theta}^1\int_{y}^{y+\theta}\frac{w(s)}{s}\,ds\,d\left(\frac{1}{w(y)}\right)\leq \int_{\theta}^1w(y)\frac{\theta}{y}\,d\left(\frac{1}{w(y)}\right) \lesssim 1,
\end{multline*}
just like in \eqref{e:intm}. Combining these two inequalities we get \eqref{e:hia}.
\end{proof}
\subsection{Proof of Lemma \ref{l:ml}: dyadic martingale}\label{ss:dma}
Once we obtained the intermediate approximation of $I$ by a Bloch function, we can proceed to martingales. It is well known (see, for example, \cite{Mak}) that the Bloch functions in the unit disc can (up to a constant error) be viewed as dyadic martingales. The case of Lipschitz domains was considered by Llorente, Corollary \textbf{2} in \cite{Ll:98} is the main instrument in the following argument.

Fix any point $x_0\in\mathbb{R}^n$ and let
\begin{equation}\notag
\begin{split}
&A = \|\phi'\|_{\infty}\sqrt{n},\;\lambda = 8+ \frac{1}{A},\\
&\Omega_1 = \{(x,y): x\in \lambda Q(x_0):\phi(x)\leq y \leq \phi(x) + \lambda A\}.
\end{split}
\end{equation}
The following proposition holds
\begin{proposition}[Corollary \textbf{2}, \cite{Ll:98}]\label{p:c2ll}
If $v\in \mathcal{B}(\Omega_1)$ then there exists a dyadic martingale $\mathcal{M} = \{M_k,\mathcal{F}_k(x_0),\omega\}$ in $Q(x_0)$ and a positive constant $C = C(\phi,n)$ such that $\omega$ is absolutely continuous with respect to the Lebesgue measure on $Q(x_0)$, and if $A2^{-(k+1)}\leq t\leq A2^{-k}$ then for every $k\in\mathbb{N}$ and $x\in Q(x_0)$
\begin{subequations}
\begin{eqnarray}
\label{e:mv.1}& |M_k(x) - v(x,\phi(x)+t)| \leq C\|v\|_{\mathcal{B}},\\
\label{e:mv.2}&|M_{k+1}(x) - M_k(x)| \leq C\|v\|_{\mathcal{B}}.
\end{eqnarray}
\end{subequations}
\end{proposition}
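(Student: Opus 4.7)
The plan is to adapt the Makarov dyadic-martingale approximation of Bloch functions to the Lipschitz geometry of $\Omega_1$: sample $v$ at a Whitney/Carleson anchor point in each dyadic cube and then upgrade the sampled sequence to a genuine martingale against harmonic measure.

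First, for each $Q \in \Delta_k(x_0)$ I would associate the anchor $P_Q := (x_Q, \phi(x_Q) + A 2^{-k}) \in \Omega_1$. Since $\dist(P_Q, \partial\Omega_\phi) \sim A 2^{-k}$ and the anchors of a parent--child pair are joined by a Harnack chain inside $\Omega_1$ of length depending only on $n$ and $\|\phi'\|_\infty$, integrating the Bloch bound $|\nabla v|(x,y) \leq \|v\|_\mathcal{B}/\dist((x,y),\partial\Omega_\phi)$ along the chain yields
\begin{equation}
|v(P_Q) - v(P_{Q'})| \lesssim \|v\|_\mathcal{B}, \qquad |v(P_Q) - v(x,\phi(x)+t)| \lesssim \|v\|_\mathcal{B},
\end{equation}
whenever $Q'$ is a child of $Q$, $x \in Q$, and $A 2^{-(k+1)} \leq t \leq A 2^{-k}$. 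Any piecewise constant sequence $\{M_k\}$ on $\{\Delta_k(x_0)\}$ staying within $O(\|v\|_\mathcal{B})$ of the samples $\{v(P_Q)\}$ will therefore automatically satisfy \eqref{e:mv.1} and \eqref{e:mv.2}.

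To manufacture such a martingale, I would let $\omega$ be the harmonic measure of the truncated domain $\Omega_1$ from a distinguished top point, restricted to the graph-of-$\phi$ part of $\partial\Omega_1$ and transported to $Q(x_0)$ by vertical projection. Dahlberg's theorem makes $\omega$ an $A_\infty$ weight with respect to Lebesgue measure on $Q(x_0)$, yielding both the required absolute continuity and dyadic doubling. Define $M_k$ as an interior $\omega$-weighted average of $v$ over the Carleson tent above the rank-$k$ cube containing $x$; the martingale identity $\mathbb{E}_\omega[M_{k+1}|\mathcal{F}_k(x_0)] = M_k$ then follows from the reproducing property of harmonic measure together with harmonicity of $v$, while \eqref{e:mv.1} and \eqref{e:mv.2} follow from comparing these interior averages to the values $v(P_Q)$ via the Harnack-chain estimate of Step 1.

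The main obstacle is that a Bloch function need not admit any pointwise boundary trace -- this is in fact the content of Makarov's theorem -- so the martingale cannot be defined as the conditional expectation of a boundary value; every average must be performed inside $\Omega_1$, and the comparison of these interior averages with the anchor values $v(P_Q)$ must be quantitative and uniform in scale, with constants depending only on $\phi$ and $n$. In the upper halfspace all of this is nearly automatic, but in the Lipschitz setting it requires the full boundary theory of harmonic measure (Dahlberg, Jerison--Kenig) together with careful Harnack-chain bookkeeping, which is precisely the content of Llorente's Corollary \textbf{2}.
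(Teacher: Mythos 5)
You are sketching a proof of a statement that the paper itself does not prove: Proposition \ref{p:c2ll} is imported verbatim from Llorente \cite{Ll:98} (his Corollary \textbf{2}) and used as a black box, so the comparison is really with Llorente's argument. Your preparatory layer is fine and is indeed the standard reduction: corkscrew anchors $P_Q$, Harnack chains, integration of the Bloch bound to get $|v(P_Q)-v(P_{Q'})|\lesssim\|v\|_{\mathcal{B}}$ and $|v(P_Q)-v(x,\phi(x)+t)|\lesssim\|v\|_{\mathcal{B}}$, and Dahlberg's theorem to guarantee that harmonic measure $\omega$ (pulled back to $Q(x_0)$) is absolutely continuous with respect to Lebesgue measure. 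It is also true that any $\mathcal{F}_k$-measurable sequence staying within $O(\|v\|_{\mathcal{B}})$ of the anchor values satisfies \eqref{e:mv.1}--\eqref{e:mv.2}.

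The genuine gap is in manufacturing an \emph{exact} martingale with these properties. Defining $M_k$ as an ``interior $\omega$-weighted average of $v$ over the Carleson tent'' does not give one: the identity $\mathbb{E}_{\omega}[M_{k+1}\mid\mathcal{F}_k(x_0)]=M_k$ does not follow from harmonicity plus the reproducing property of harmonic measure, because that property reproduces $v$ from boundary values which, as you yourself note, a Bloch function need not possess; at best your tent averages satisfy $|\mathbb{E}_{\omega}[M_{k+1}\mid\mathcal{F}_k(x_0)]-M_k|\le C\|v\|_{\mathcal{B}}$, i.e.\ they form a quasi-martingale. Repairing this by subtracting the compensator restores the martingale property but may drift from the original values by an error of order $k\|v\|_{\mathcal{B}}$, which destroys \eqref{e:mv.1}; being a martingale for one fixed $\omega$ and staying uniformly close to the corkscrew samples must be achieved simultaneously, and that is exactly the content of Llorente's corollary. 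The standard route (essentially his) is to take $\omega=\omega^{X_0}$ for a fixed pole, form the genuine martingales $M_k^{\varepsilon}=\mathbb{E}_{\omega}\left[v(\cdot,\phi(\cdot)+\varepsilon)\mid\mathcal{F}_k(x_0)\right]$ for $\varepsilon>0$, and prove, uniformly in $0<\varepsilon\le A2^{-k}$, that $M_k^{\varepsilon}(x)$ lies within $C\|v\|_{\mathcal{B}}$ of $v$ at the corkscrew point of the rank-$k$ cube containing $x$; this needs the Jerison--Kenig/Dahlberg localization $\omega|_Q/\omega(Q)\approx\omega^{A_Q}|_Q$ together with the decay of $\omega^{A_Q}$ off $Q$ to beat the logarithmic growth of the sections $v(\cdot,\phi(\cdot)+\varepsilon)$, and then a diagonal limit $\varepsilon\to0$ (legitimate because each level of the filtration is finite and the increments are uniformly bounded) produces $\mathcal{M}$. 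Without this uniform comparison and limiting step, the martingale property in your construction is asserted rather than proved.
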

We apply this proposition to $H$ and put
\begin{equation}\notag
\Lambda = \{\Lambda_k,\mathcal{F}_{\alpha_k}(x_0),\omega\} := \{M_{\alpha_k},\mathcal{F}_{\alpha_k}(x_0),\omega\}.
\end{equation}
Now we prove \eqref{e:li.1} and \eqref{e:li.2}. It follows from \eqref{e:mv.1} and \eqref{e:hia} that
\begin{multline*}
|\Lambda_k(x) - I(x,s_k)| = |M_{\alpha_k}(x) - I(x,s_k)| \\ \leq
|M_{\alpha_k}(x) - H(x,\phi(x) + A2^{-\alpha_k})| + |H(x,\phi(x) + A2^{-\alpha_k}) - H(x,\phi(x) + s_k)| \\+ |H(x,\phi(x) + s_k) - I(x,s_k)| \lesssim
1 + \int_{s_k} ^{A2^{-\alpha_k}}\left|\nabla H(x,\phi(x)+y)\right|\,dy \\ \lesssim 1,\quad x\in Q(x_0),
\end{multline*}
since $\|H\|_{\mathcal{B}} \lesssim 1$, and we get \eqref{e:li.1}. To obtain \eqref{e:li.2} we note that
\begin{multline*}
|\Lambda_k(x) - \Lambda_{k+1}(x)| = |M_{\alpha_k}(x) - M_{\alpha_{k+1}}(x)| \leq
|M_{\alpha_k}(x) - I(x,s_k)|\\ + |I(x,s_k) - I(x,s_{k+1})|
 + |I(x,s_{k+1}) - M_{\alpha_{k+1}}(x)|\\ \lesssim 1 + |I(x,s_k) - I(x,s_{k+1})|.
\end{multline*}
Clearly,
\begin{equation}\notag
|I(x,s_k) - I(x,s_{k+1})| \leq w(s_k)\int_{s_{k+1}}^{s_k}\,d\left(\frac{1}{w(y)}\right) = 2^k(2^{-k} - 2^{-k-1}) = 2,
\end{equation}
and the inequality \eqref{e:li.2} follows.
\subsection{Proof of Theorem \ref{t:th2}}\label{ss:crlr} The proof is standard. We apply the usual ice-cream cone construction to $\Sigma$, i.e. consider the domain
\begin{equation}\notag
\Omega = \bigcup_{x\in \Sigma} \Gamma(x,M),
\end{equation}
where $\Gamma(x,M)$ is the cone with vertex $x$ and aperture $M$, $\Gamma(x,M) = \{(\tilde{x},y)\in\mathbb{R}^{n+1}_+: |\tilde{x}-x|\leq My \}$. Clearly $\Omega$ is the area above the graph of some Lipschitz function $\phi$ with $\|\phi'\|_{\infty} = \frac{1}{M}$, so that $\Omega = \Omega_{\phi}$. The condition \eqref{e:cgr} then implies that
\begin{equation}\notag
|u(x,y)| \leq Kw(y) \lesssim w(\dist((x,y),\partial \Omega)),\quad (x,y)\in \Omega,
\end{equation}
and we can apply Theorem \ref{t:th1} to obtain
\begin{equation}\notag
\limsup_{\delta\rightarrow0}\frac{I(x,\delta)}{\sqrt{\log w(\delta)\log\log\log w(\delta)}} \leq C,\quad a.e.\; x\in\mathbb{R}^n.
\end{equation}
Theorem \ref{t:th2} follows immediately.
\section{An example}\label{s:ex}
\subsection{}
In the proof of Theorem \ref{t:th1} we introduced the harmonic function $H$ which is shown to be a Bloch function. In addition, the estimate \eqref{e:hia} implies that $H \in h^{\infty}_{\log w}(\Omega_{\phi})$, and that the LIL  in \eqref{e:alil} holds for $H$ as well,
\begin{equation}\label{e:blil}
\limsup_{\delta\rightarrow0}\frac{H(x,\delta)}{\sqrt{\log w(\delta)\log\log\log w(\delta)}} \lesssim 1, \quad a.e.\; x\in\mathbb{R}^n.
\end{equation}
To obtain this estimate we used the special nature of $H$, namely that it was constructed on $u\in h^{\infty}_w(\Omega_{\phi})$. It is then natural to ask if an \textit{arbitrary} function $v \in h^{\infty}_{w_0}(\Omega_{\phi})\bigcap \mathcal{B}(\Omega_{\phi})$ satisfies the following LIL
\begin{equation}\label{e:clil}
\limsup_{\delta\rightarrow0}\frac{H(x,\delta)}{\sqrt{w_0(\delta)\log\log w_0(\delta)}} \lesssim 1,\quad a.e. \;x\in\mathbb{R}^n.
\end{equation}
The answer to this question is negative as provided by the following proposition.
\begin{proposition}\label{p:ex}
Let $w_0(y) = \log\log\frac{e}{y}+1,\; y\in (0,1]$. There exists a constant $A>0$, a function $v\in h^{\infty}_{w_0}(\mathbb{R}^2_+)\bigcap \mathcal{B}(\mathbb{R}^2_+) $, a number $k_0\in\mathbb{N}$ and a sequence $\{y_k\}_{k=k_0}^{\infty}\rightarrow 0$ such that
\begin{equation}\label{e:ex}
\lambda_1\left(\{t\in[0,1]:\, |v(t,y_k)| \geq \frac{w_0(y_k)}{A}\}\right) \geq \frac{1}{10}.
\end{equation}
\end{proposition}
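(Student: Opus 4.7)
The plan is to realize the counterexample as the Bloch function attached to a dyadic martingale on $(0,1]$ whose absolute value is pinned to the weight $w_0$ at a carefully chosen sequence of super-levels, using the same lacunary scheme as in Section~\ref{ss:ml}. Define $\alpha_k$ by $w_0(2^{-\alpha_k})=k$, so that $\alpha_k\sim e^{k-1}$, and build a dyadic martingale $\{\tilde M_m,\mathcal F_m,\lambda_1\}$ on $(0,1]$ with unit increments and growing absorbing boundaries. Start from $\tilde M_0=0$; for $m\in(\alpha_{k-1},\alpha_k]$, on each dyadic cube $C$ of rank $m-1$ with value $\tilde M_{m-1}|_C=t$, split $C$ in half and set $\tilde M_m=t+1$ on the left and $\tilde M_m=t-1$ on the right if $|t|<k$, while if $|t|=k$ we freeze the walk and set $\tilde M_m=t$ on both halves. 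This is a genuine dyadic martingale with $|\tilde M_m-\tilde M_{m-1}|\leq 1$ and $|\tilde M_{\alpha_k}|\leq k$ pointwise.

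A classical gambler's-ruin estimate shows that starting from any state in $\{-(k-1),\dots,k-1\}$ the walk reaches $\pm k$ within time $O(k^2)$ with probability bounded below by a constant. Since $\alpha_k-\alpha_{k-1}\sim e^k\gg k^2$, iterating inside each window $(\alpha_{k-1},\alpha_k]$ gives absorption with probability at least $1-\exp(-ce^k/k^2)$. The Borel--Cantelli lemma then produces a subset
\[
E_{k_0}=\{x\in(0,1]:|\tilde M_{\alpha_k}(x)|=k\ \text{for every}\ k\geq k_0\}
\]
of Lebesgue measure at least $9/10$ once $k_0$ is sufficiently large.

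To turn the martingale into a Bloch function, expand each martingale difference $\tilde d_m=\tilde M_m-\tilde M_{m-1}$ in the Haar basis; because $|\tilde d_m|\leq 1$, the coefficients $a_I$ lie in $\{-1,0,1\}$. Put
\[
v(x,y)=\sum_{m=1}^\infty\sum_{I\in\Delta_m}a_I\,\tilde\psi_I(x,y),
\]
where $\tilde\psi_I$ is the Poisson extension to $\mathbb{R}^2_+$ of the Haar wavelet $\psi_I$ after $1$-periodic extension. The standard pointwise bounds on the harmonic extensions of Haar wavelets, which underlie the dyadic representation of Bloch functions used in Proposition~\ref{p:c2ll}, yield $v\in\mathcal B(\mathbb{R}^2_+)$ with $\|v\|_{\mathcal B}\lesssim 1$, and a telescoping argument gives $|v(x,2^{-m})-\tilde M_m(x)|\leq C$ uniformly in $m$ and $x$. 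Combined with the bound $|\tilde M_m|\leq k$ for $m\leq\alpha_k$ and with the Bloch oscillation estimate between neighboring scales, this gives $|v(x,y)|\leq w_0(y)+C$, hence $v\in h^{\infty}_{w_0}(\mathbb{R}^2_+)$.

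Finally, set $y_k=2^{-\alpha_k}$ and $A=3$. For every $x\in E_{k_0}$ and every $k\geq k_0$ large enough that $k\geq 3C$,
\[
|v(x,y_k)|\geq|\tilde M_{\alpha_k}(x)|-C=k-C\geq k/A=w_0(y_k)/A,
\]
so the set appearing in~\eqref{e:ex} contains $E_{k_0}$ and has measure at least $9/10$. The main technical hurdle is the reverse Bloch-martingale correspondence: while Proposition~\ref{p:c2ll} extracts a martingale from a Bloch function, the present argument needs a Bloch harmonic function built from a prescribed bounded-difference martingale, and this relies on verifying Carleson-type summability of the Haar wavelet coefficients together with the decay of harmonic extensions of mean-zero wavelets at scales larger than their support.
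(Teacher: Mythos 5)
The decisive gap is the step you yourself flag as the ``main technical hurdle'': the reverse martingale--Bloch correspondence. Proposition~\ref{p:c2ll} only goes from Bloch functions to dyadic martingales; the converse statement you need --- that the Poisson extension of the Haar series of a dyadic martingale with bounded increments is a Bloch function, with $|v(x,2^{-m})-\tilde M_m(x)|\leq C$ uniformly --- is false in general, and it fails for your specific $\tilde M$. The point is that Haar wavelets are discontinuous: for a coarse scale $2^{-j}\gg y$ the gradient of $P_y*\psi_I$ near a jump point of $\psi_I$ is of size $1/y$, not $O(2^{j})$, so the coarse scales sum to $O(1/y)$ only if the partial sums $\tilde M_m$ have uniformly bounded oscillation across dyadic points. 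In your construction they do not: with the deterministic splitting ($+1$ on the left child, $-1$ on the right child), at any fixed dyadic point $p$ the tower of intervals just to the right of $p$ drifts monotonically upward and is absorbed at $+k$, while the tower just to the left drifts downward and is absorbed at $-k$; hence inside window $k$ the function $\tilde M_m$ has a jump of size $2k$ across $p$, constant on intervals of length much larger than $2^{-\alpha_k}$ on both sides. A direct computation with $\partial_x P_y$ then gives $|\nabla v(p,y)|\gtrsim k/y\sim \log\log(1/y)\cdot y^{-1}$ for $y$ near $2^{-\alpha_k}$, so $v\notin\mathcal{B}(\mathbb{R}^2_+)$, and the same jumps destroy the claimed uniform closeness of $v(\cdot,2^{-m})$ to $\tilde M_m$ (randomizing the signs does not help: adjacent towers are absorbed at opposite barriers on a fixed proportion of dyadic points). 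There is also a smaller problem at the end: with $y_k=2^{-\alpha_k}$ the interval on which $\tilde M_{\alpha_k}$ is frozen may have length exactly $2^{-\alpha_k}$, and the Poisson kernel at height $y_k$ puts a fixed positive fraction of its mass outside it, where the martingale may have the opposite sign; so $|v(x,y_k)|\geq k-C$ does not follow unless you take $y_k$ a small multiple of the constancy length, as the paper does with its $\rho_k$.

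This is exactly why the paper does not Poisson-extend a dyadic martingale. Instead it runs your kind of stopping-time (``absorption'') scheme directly on a series $\Phi_k=\sum c_I\varphi_I$ built from a \emph{smooth}, compactly supported, mean-zero wavelet $\varphi\in C^{10}$ with $\langle\varphi,\psi\rangle\neq 0$: smoothness makes each scale-$j$ block Lipschitz with constant $O(2^{j})$, so the coarse scales contribute $\sum_{j\leq m}2^{j}\lesssim 1/y$ to $|\nabla v|$ and the Bloch and $h^{\infty}_{w_0}$ bounds survive, while martingales enter only afterwards, through the Haar coefficients of $\Phi_{\beta_{j+1}}$ and the identity $\int_0^1\langle\tilde\Lambda\rangle_k^2=\int_0^1\tilde\Lambda_k^2$, to show that the non-stopped set has measure at most $3/4$. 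If you want to keep your absorbing-walk picture (which is conceptually the same as the paper's stopping procedure, and its measure estimate via gambler's ruin is a legitimate alternative to the quadratic-function argument), you must replace each Haar increment by a smooth mean-zero bump; then the coefficients at different scales interact, and you need something like the paper's sparsification parameter $a$ and the lower bound \eqref{e:hcl} to recover control of the Haar coefficients, so the real work of the paper's proof cannot be bypassed.
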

It is known that a function in $h^{\infty}_w(\mathbb{R}^2_+)$ can grow as fast as $w$ only on small part of vertical rays $\{x+iy\},\; y\in \mathbb{R}_+$, however it can attain the maximal growth on the subsets of those rays for a.e. $x\in \mathbb{R}$ (see \cite{LM:12}, \cite{BLMT:10}). Unfortunately, we can not use the example provided there, since it is constructed as a lacunary trigonometric series, for which, as it can be shown, \eqref{e:blil} holds if it belongs to the Bloch class. \\
\begin{proof}
Given two real-valued functions $f,g\in L^2(\mathbb{R}^n)$ we denote the scalar product $\int_{\mathbb{R}^n}f(t)g(t)dt$ by $\langle f,g\rangle$. Consider a function $\varphi:\mathbb{R}\mapsto\mathbb{R}$ such that $\supp \varphi\subset [0,1],\;\varphi\in C^{10},\; \|\varphi\|_{\infty}\leq 1$. We also require that $\int_{\mathbb{R}}\varphi(t)\,dt =0$ and $\langle\varphi,\psi\rangle \neq 0$ (where $\psi$ is a Haar wavelet mentioned earlier). For example we can take the suitable renormalization of the Daubechies wavelet (or any other smooth wavelet with compact support that satisfies our conditions). By $P_y$ we denote the Poisson kernel for the halfplane, $P_y(t) = \frac{y}{\pi(y^2+t^2)},\; y>0,\,t\in\mathbb{R}$. \par
The idea is to obtain a functional series of the form
\begin{equation}\label{e:wavjs}
\sum_{j=0}^{k}\sum_{I\in \Delta_j}c_{I}\varphi_{I}(t) := \Phi_k(t),\quad t\in\mathbb{R},
\end{equation}
that satisfies properties similar to those in the statement, and then prove that the corresponding Bloch function provides the required example. To elaborate we first construct $\Phi_k$ and an increasing sequence $\{\beta_j\}_{j=1}^{\infty}\subset \mathbb{Z}_+$ in such a way that  we have
\begin{subequations}\label{e:mblgr}
\begin{eqnarray}
\label{e:mblgr.1}&\|\Phi_k - \Phi_{k-1}\|_{\infty} \leq 1,\\
\label{e:mblgr.2}&\|\Phi_{k}\|_{\infty} \leq w_0(2^{-k})+2,\\
\label{e:mblgr.3}&\lambda_1\left(\{t\in(0,1]: |\Phi_{\beta_{k}}|(t) \geq \frac{w_0(2^{-\beta_{k}})}{4}\}\right) \geq \frac{1}{10},
\end{eqnarray}
\end{subequations}
for any integer $k\geq k_0$.\par
The property \eqref{e:mblgr.1} is an analogue of the Bloch condition, \eqref{e:mblgr.2} is the growth restriction, and \eqref{e:mblgr.3} corresponds to \eqref{e:ex} (so that there is no LIL for $\Phi_k$ with $w_0$).
\subsection{Construction of $\{\beta_j\}$ and $\Phi_k$}\label{ss:sbeta}
First we chose $a\in\mathbb{N}$ such that
 $2^{-a+1}\|\varphi'\|_{\infty} \leq \frac14\left|\langle\varphi,\psi\rangle\right|$.
Now chose a natural $j_0 \geq 4\|\varphi'\|_{\infty}+4$ and an increasing sequence $\beta_j\in\mathbb{N}$ in such a way that
\begin{equation}\label{e:jn}
\begin{split}
&\beta_1 = 0,\;\frac{\beta_j}{a}\in\mathbb{N},\\
&j-1\leq w_0(2^{-\beta_j})\leq j,\\
&\left(\frac{\beta_j - \beta_{j-1}}{a}-1\right)\langle\varphi,\psi\rangle^2 \geq 4j^2, \quad j\geq j_0.
\end{split}
\end{equation}
It is not hard to verify that such choice is possible (we remind that $w_0(y) = \log\log\frac{e}{y}+1$).\par
The functions $\Phi_k$ are constructed via double induction, first on $j$, and then on $m$ between $\beta_j$ and $\beta_{j+1}$. Put $\Phi_0(t) = \varphi(t)$. Assume now that we obtained $\Phi_{\beta_j}$ for some $j\in\mathbb{N}$. Consider all the intervals $I\in \Delta_{\beta_j}$ such that $\sup_{t\in I}|\Phi_{\beta_j}|(t) > j$, we denote the set of these intervals by $\mathcal{E}^{\beta_j}_j$. Now suppose that we have constructed $\Phi_{m-1}$ and $\mathcal{E}_{j}^{m-1}$ for some $m,\, \beta_{j}+1\leq m \leq \beta_{j+1}$. If $\frac{m}{a}\in\mathbb{Z}$, then for $I\in \Delta_m$ and $t\in I$ let
\begin{subequations}\label{e:lmdr}
\begin{eqnarray}
\label{e:lmdr.3}&\Phi_{m}(t) = \Phi_{m-1}(t),\quad t\in \bigcup_{J\in \mathcal{E}^{m-1}_j}J,\\
\label{e:lmdr.2}&\Phi_{m}(t) = \Phi_{m-1}(t) + \varphi_{I}(t),\quad t\notin \bigcup_{J\in \mathcal{E}^{m-1}_j}J,\\
\label{e:lmdr.1}&\mathcal{E}^m_j = \mathcal{E}^{m-1}_j\bigcup\{J\in\Delta_m: \sup_{t\in J}|\Phi_m(t)| >j\}.
\end{eqnarray}
\end{subequations}
Otherwise we put
\begin{subequations}\label{e:lmdr1}
\begin{eqnarray}
\label{e:lmdr1.3}&\Phi_{m}(t) = \Phi_{m-1}(t),\quad t\in (0,1],\\
\label{e:lmdr1.1}&\mathcal{E}^m_j = \mathcal{E}^{m-1}_j.
\end{eqnarray}
\end{subequations}
Finally, put
\begin{equation}\notag
\mathcal{E}_j = \bigcup_{m=\beta_j}^{\beta_{j+1}-1}\mathcal{E}^{m}_j = \mathcal{E}^{\beta_{j+1}-1}_j.
\end{equation}
What we do here is, essentially, a stopping time procedure applied (instead of martingales as usual) to the functional series of the form like in \eqref{e:wavjs}. We see that if $I \in \mathcal{E}_j$, then the construction is stopped at this interval, and $\Phi_{\beta_{j+1}}(t) = \Phi_{m}(t),\; t\in I,\; m = \rank I$. If, on the other hand, $t \in (0,1]\setminus \bigcup_{J\in\mathcal{E}_j}J$, then the construction happens on every step (divisible by $a$) up until $\beta_{j+1}$.  The set $(0,1]\setminus\bigcup_{J\in\mathcal{E}_j}J$ can be decomposed into a disjoint union of intervals from $\Delta_{\beta_{j+1}}$, we denote the set of these intervals by $\mathcal{G}_j$.\\
Clearly $\Phi_m$ is of the form like in \eqref{e:wavjs}, moreover,
\begin{equation}\notag
\Phi_{\beta_{j+1}}(t) = \Phi_{\beta_j}(t) + \sum_{m=\beta_{j}+1}^{\beta_{j+1}}\sum_{J\in\Delta_m}c_J\varphi_J(t),\quad t\in (0,1],
\end{equation}
where $c_J =1$ only if $\frac{\rank J}{a} \in\mathbb{Z}$ \textit{and} there is no interval $I\in\mathcal{E}_j$ such that $I\supset J$, $c_J = 0$ otherwise. We also see that
\begin{equation}\notag
\begin{split}
&\sup_{t\in I}|\Phi_{\beta_{j+1}}|(t) > j,\quad I\in \mathcal{E}_j;\\
&|\Phi_{\beta_{j+1}}| \leq j+1, \quad j\in\mathbb{N}.
\end{split}
\end{equation}\par
We are left to check \eqref{e:mblgr.1}-\eqref{e:mblgr.3}. The condition \eqref{e:mblgr.1} follows straight from \eqref{e:lmdr}, since $\|\varphi_{I}\|_{\infty} = 1$ for any interval $I$. For any $k\in\mathbb{N}$ there exists $j_k\in \mathbb{N}$ such that $\beta_{j_k}\leq k \leq \beta_{j_k+1}-1$. We therefore have
\begin{equation}\notag
\|\Phi_{k}\|_{\infty} \leq j_k+1 \leq w_0(2^{-\beta_{j_k}}) +2 \leq w_0(2^{-k}) + 2,
\end{equation}
and we obtain \eqref{e:mblgr.2}. 

\subsection{Proof of \eqref{e:mblgr.3}: martingale decomposition}\label{ss:mblgr.3}
Pick any $j\geq j_0$ (we remind that $j_0$ was defined in \eqref{e:jn}). Since $j_0 \geq 4\|\varphi'\|_{\infty}+4$, we see that $\frac{j}{2} - \|\varphi'\|_{\infty} \geq \frac{j}{2} - \frac{j_0}{4} \geq \frac{j}{4}$, and, due to \eqref{e:jn} we have $\frac{j}{2} - \|\varphi'\|_{\infty} \geq \frac{w_0(2^{-\beta_j})}{4}$. It follows that to obtain \eqref{e:mblgr.3} it is enough to prove
\begin{equation}\label{e:lgi}
\lambda_1\left(\{t\in (0,1]:|\Phi_{\beta_{j+1}}(t)| \geq \frac{j}{2}-\|\varphi'\|_{\infty}\}\right) \geq \frac{1}{10}.
\end{equation}

The first step is to prove that $|\Phi_{\beta_{j+1}}|$ is "sufficiently large" on the intervals from $\mathcal{E}_j$, namely that for any $I\in \mathcal{E}_j$ we have
\begin{equation}\label{e:phijj}
|\Phi_{\beta_{j+1}}|(t) \geq j - 2\|\varphi'\|_{\infty},\quad t\in I.
\end{equation}
Indeed, for $m = \rank I$ we have
\begin{multline*}
|\Phi_m'|(t) \leq \sum_{J\in\Delta:\, t\in JI,\, \rank J \leq m}c_{J}\|\varphi_{J}'\|_{\infty}\\ = \|\varphi'\|_{\infty}\sum_{J\in\Delta:\, t\in J,\, \rank J \leq m}\frac{c_J}{\lambda_1(J)}\leq 2^{m+1}\|\varphi'\|_{\infty},\quad t\in (0,1],
\end{multline*}
since $|c_J|\leq 1$ for any $J\in \Delta$. Again we see that $\Phi_{\beta_{j+1}}(t) = \Phi_{m}(t)$ on $I$, therefore $\left|\sup_{t\in I}\Phi_m(t) - \inf_{t\in I}\Phi_m(t)\right| \leq \int_{I}|\Phi'_m(t)|\,dt\leq 2\|\varphi'\|_{\infty}$, and we get \eqref{e:phijj}.\par
Now we show that
\begin{equation}\label{e:gj}
\lambda_1\left(\bigcup_{J\in\mathcal{G}_j}J\right) \leq \frac{3}{4},
\end{equation}
combined with \eqref{e:phijj} it implies \eqref{e:lgi}. In order to do this consider the Haar decomposition of $\Phi_{\beta_{j+1}}$,
\begin{equation}\label{e:phihd}
\Phi_{\beta_{j+1}} = \sum_{m=0}^{\infty}\sum_{I\in \Delta_m}b_{I}\psi_{I},
\end{equation}
where $b_{I} = 2^{\rank I}\langle\Phi_{\beta_{j+1}},\psi_{I}\rangle =$ $2^{\rank I}\sum_{k=0}^{\beta_{j+1}}\sum_{J\in \Delta_k}c_J\langle\varphi_J,\psi_I\rangle$, and $c_{J}$ is either $0$ or $1$. Here we sum from $m=0$, since $\supp \Phi_{k}\subset [0,1]$ and $\int_{0}^1\Phi_k(t)\,dt=0$ for any $k\in\mathbb{Z}_+$. If we put
\begin{equation}\notag
\tilde{\Lambda}_k = \sum_{m=0}^{k}\sum_{I\in \Delta_m}b_{I}\psi_{I},
\end{equation}
we see that $\{\tilde{\Lambda}_k,\mathcal{F}_k,\lambda_1\}$ is a dyadic martingale on $(0,1]$. Since $\Phi_{\beta_{j+1}} \in C^{10}(\mathbb{R})$, the sum on the right-hand side in \eqref{e:phihd} converges to $\Phi_{\beta_{j+1}}$ uniformly on $\mathbb{R}$ as $k\rightarrow\infty$. It follows immediately that $\langle\tilde{\Lambda}\rangle_k$ converges uniformly to a bounded limit which we denote by $\langle\tilde{\Lambda}\rangle_{\infty}$.\\
Our goal here is to prove that the quadratic function of $\tilde{\Lambda}$ is "big" on the intervals from $\mathcal{G}_j$, so that we can use the standard dyadic martingale methods to estimate the size of $\bigcup_{J\in\mathcal{G}_j}J$. For any $k\in\mathbb{Z}_+$, the following equality holds
\begin{equation}\label{e:gli}
\int_{0}^1\langle\tilde{\Lambda}\rangle_k^2(t)\,dt = \int_{0}^1\tilde{\Lambda}_k^2(t)\,dt.
\end{equation}
Assume for a moment that we know that
\begin{equation}\label{e:qfl}
\langle\tilde{\Lambda}\rangle_{\infty}^2(t) \geq 4j^2,\quad t\in \bigcup_{J\in\mathcal{G}_j}J.
\end{equation}
Then \eqref{e:gli} implies that
\begin{equation}\notag
\begin{split}
&4j^2\cdot\lambda_1\left(\bigcup_{J\in\mathcal{G}_j}J\right) \leq \int_{\bigcup_{J\in\mathcal{G}_j}J}\langle\tilde{\Lambda}\rangle_{\infty}^2(t)\,dt \leq \int_0^1\langle\tilde{\Lambda}\rangle_{\infty}^2(t)\,dt \\=
&\int_0^1\tilde{\Lambda}_{\infty}^2(t)\,dt = \int_0^1\Phi_{\beta_{j+1}}^2(t)\,dt \leq (j+1)^2,
\end{split}
\end{equation}
and \eqref{e:gj} follows immediately. It remains to prove the estimate \eqref{e:qfl}.
\subsection{Proof of \eqref{e:mblgr.3}: inequality \eqref{e:qfl}}\label{ss:mblgr.4}
First we show that if $c_{I} = 1$ for some $I\in\Delta_m,\, \beta_{j} \leq m\leq \beta_{j+1}-1$, then 
\begin{equation}\label{e:hcl}
|b_{I}| \geq \frac12\left|\langle\varphi,\psi\rangle\right|.
\end{equation}
Fix such an interval $I$. For any $J\in \Delta_k,\; k\leq m$, the standard calculation gives
\begin{multline}\notag
|\langle\varphi_J,\psi_I\rangle|\\ = \left|\int_{\mathbb{R}}\varphi(2^kt-x_J)\psi(2^mt-x_I)\,dt\right| = \left|\int_{\mathbb{R}}\varphi(2^k(t-2^{-m}x_I)-x_J)\psi(2^mt)\,dt\right|\\ =
2^{-k}\left|\int_{\mathbb{R}}\varphi(t-2^{k-m}x_I - x_J)\psi(2^{m-k}t)\,dt\right|\\ = 2^{-k}\left|\int_{\mathbb{R}}\left(\varphi(t-2^{k-m}x_I - x_J) - \varphi(-2^{k-m}x_I - x_J)\right)\psi(2^{m-k}t)\,dt\right| \\=
2^{-k}\left|\int_{\mathbb{R}}\int_{-2^{k-m}x_I - x_J}^{t-2^{k-m}x_I - x_J}\varphi'(s)\,ds\psi(2^{m-k}t)\,dt\right| \leq 2^{-k}\|\varphi'\|_{\infty}\int_{\mathbb{R}}\left|t\psi(2^{m-k}t)\right|\,dt \\=
2^{k-2m}\|\varphi'\|_{\infty}\int_{\mathbb{R}}|t\psi(t)|\,dt \leq 2^{k-2m}\|\varphi'\|_{\infty}.
\end{multline}
Now we see that if $k> m$, then $\langle\varphi_{J},\psi_{I}\rangle =0$ for any $J\in \Delta_k$, and if  $k\leq m$, then there exists at most one $J\in \Delta_k$ such that $\langle\varphi_{J},\psi_{I}\rangle \neq 0$. We therefore have
\begin{multline}\notag
|b_{I}| = 2^{m}\left|\sum_{k=0}^{\beta_{j+1}}\sum_{J\in \Delta_k}c_J\langle\varphi_J,\psi_I\rangle\right| = 2^{m}\left|\sum_{k\leq m,\,J\in \Delta_k,\, J\supset I}c_J\langle\varphi_J,\psi_I\rangle\right|  \\ \geq
 2^{m}\left|\langle\varphi_{I},\psi_{I}\rangle\right| - 2^{m}\sum_{k\leq m-1,\,J\in \Delta_k,\, J\supset I}|c_J||\langle\varphi_J,\psi_I\rangle|\\ \geq
\langle\varphi,\psi\rangle - 2^m\sum_{k\leq m-1,\,J\in \Delta_k,\, J\supset I}|c_J|\|\varphi'\|_{\infty}2^{k-2m}\\ \geq \langle\varphi,\psi\rangle - \|\varphi'\|_{\infty}\sum_{k\leq m-1,\,J\in \Delta_k,\, J\supset I}2^{k-m}|c_{J}|.
\end{multline}
It follows from \eqref{e:lmdr.2} that if $c_{J}=1$ then $c_{J} = 0$ for $J\in\Delta_k$, $m-a< k \leq m-1$ (the decomposition of $\Phi_{\beta_{j+1}}$ has very sparse coefficients). Combined with the choice of $a$, it gives
\begin{equation}\notag
|\langle\varphi,\psi\rangle| - \|\varphi'\|_{\infty}\sum_{k\leq m-1,\,J\in \Delta_k,\, J\supset I}2^{k-m}|c_{J}| \geq \langle\varphi,\psi\rangle - \|\varphi'\|_{\infty}2^{-a} \geq \frac{1}{2}|\langle\psi,\varphi\rangle|,
\end{equation}
and we have \eqref{e:hcl}.\par
Fix any $I\in \mathcal{G}_j$. Again we note that $c_J=1$ for any $J\in \Delta_m$ such that $\frac{m}{a}\in \mathbb{Z}$, $J\supset I$ and $\beta_j\leq m\leq\beta_{j+1}-1$. Therefore \eqref{e:hcl} implies that $|b_{J}| \geq \frac12\left|\langle\varphi,\psi\rangle\right|$ for such intervals $J$, and  due to \eqref{e:jn} we have
\begin{multline}\notag
\langle\tilde{\Lambda}\rangle_{\infty}^2(t) \geq \sum_{\beta_j\leq m\leq\beta_{j+1}-1,\, \frac{m}{a}\in\mathbb{Z},\,J\in \Delta_m,\, t\in J}|b_{J}|^2\\ \geq \frac14\left(\frac{\beta_{j+1}-\beta_j}{a}-1\right)\left|\langle\varphi,\psi\rangle\right|^2 \geq 100j^2,
\end{multline}
for $t\in I$, and we get \eqref{e:qfl}.
\subsection{How to create a Bloch function from $\Phi_j$}\label{ss:bloch}
Let
\begin{equation}\notag
v_k(x,y) = \left(\Phi_{k}*P_y\right)(x),\quad x\in\mathbb{R},\; k\geq k_0.
\end{equation}
First we show that $v_k\rightarrow v$ as $k\rightarrow\infty$, where $v$ is a harmonic function.\par
 Fix any $y>0$.  Since $c_I $ is either $0$ or $1$, we have for natural $m\leq n$
\begin{multline*}
\left|v_m(x,y) - v_n(x,y)\right| \\= \left|\sum_{j=m+1}^{n}\sum_{I\in \Delta_j}c_{I}\varphi_{I}*P_y\right|(x) \leq \sum_{j=m+1}^{n}\sum_{I\in \Delta_j}|c_{I}|\left|\int_{\mathbb{R}}\varphi_{I}(t)P_y(x-t)\,dt\right|\\ \leq
\sum_{j=m+1}^{n}\sum_{i=0}^{2^j-1}\left|\int_{\mathbb{R}}\varphi(2^jt)P_y\left(x-t-\left(i+\frac12\right)2^{-j}\right)\,dt\right|. 
\end{multline*}
A standard calculation gives
\begin{equation}\label{e:est}
\begin{split}
&\sum_{i=0}^{2^j-1}\left|\int_{\mathbb{R}}\varphi(2^jt)P_y\left(x-t-\left(i+\frac12\right)2^{-j}\right)\,dt\right| \\=
&\sum_{i=0}^{2^j-1}\left|\int_{\mathbb{R}}\varphi(2^jt)\left(P_y\left(x-t-\left(i+\frac12\right)2^{-j}\right)-P_y\left(x-\left(i+\frac32\right)2^{-j}\right)\right)\,dt\right|\\ \leq
&\sum_{i=0}^{2^j-1}\left|\int_{0}^{\frac{2^{-j}}{y}}\varphi(2^jyt)\left(P\left(\frac{x}{y}-t-\frac{i+\frac12}{y}2^{-j}\right)-P\left(\frac{x}{y}-\frac{i+\frac32}{y}2^{-j}\right)\right)\,dt\right|\\ \leq&
\sum_{i=0}^{2^j-1}\left|\int_0^{\frac{2^{-j}}{y}}\varphi(2^jyt)\int_{\frac{x}{y}-\frac{i+\frac32}{y}2^{-j}}^{\frac{x}{y}-t-\frac{i+\frac12}{y}2^{-j}}|P'|(s)\,ds\,dt\right|\\ &\leq \int_0^{\frac{2^{-j}}{y}}|\varphi(2^jyt)|\sum_{i=0}^{2^j-1}\int_{\frac{x}{y}-\frac{i+\frac32}{y}2^{-j}}^{\frac{x}{y}-t-\frac{i+\frac12}{y}2^{-j}}|P'|(s)\,ds\,dt\\ \leq
&\int_0^{\frac{2^{-j}}{y}}|\varphi(2^jyt)|\int_{\mathbb{R}}|P'|(s)\,ds\,dt \leq C\frac{2^{-j}}{y},\quad x\in\mathbb{R},\; y>0,\; j\in\mathbb{N}.
\end{split}
\end{equation}
We therefore have
\begin{equation}\label{e:est1}
\left|v_m(x,y) - v_n(x,y)\right|\leq C\sum_{j=m+1}^{n}\frac{2^{-j}}{y},\quad x\in\mathbb{R},\; y>0,
\end{equation}
and the uniform convergence follows immediately.\par
Next we show that $v$ satisfies the $h^{\infty}_{w_0}$ growth condition.
For $y\geq 2^{-k}$ \eqref{e:est1} implies that
\begin{equation}\notag
|v(x,y) - v_k(x,y)| \leq C,\quad x\in\mathbb{R}.
\end{equation}
Combined with \eqref{e:mblgr.2} and definition of $w_0$ this implies that
\begin{multline*}
|v(x,y)| \leq C+|v_k(x,y)| = C + \left|\Phi_{k}*P_y\right|(x)\\ \leq C + w_0(2^{-k}) \leq Cw_0(y),\quad x\in\mathbb{R},\; 2^{-k+1}\geq\;y\geq 2^{-k},\; k\in\mathbb{N},
\end{multline*}
and, therefore, $v\in h^{\infty}_{w_0}$.\par
Now we prove that $v\in\mathcal{B}(\mathbb{R}^2_+)$. Fix any positive $y\leq 1$ and $m\in\mathbb{Z}$ such that $2^{-m+1}\geq y \geq 2^{-m}$. We have
\begin{equation*}
\left|\nabla v(x,y)\right| \leq \left|\nabla v(x,y) - \nabla v_m(x,y)\right| + \left|\nabla v_m(x,y)\right|,\quad x\in\mathbb{R}.
\end{equation*}
Repeating the estimate in \eqref{e:est} verbatim we get for any $x\in \mathbb{R}$,
\begin{multline}\label{e:estp}
\left|\nabla v(x,y) - \nabla v_m(x,y)\right|\\ \leq \left|\frac{\partial}{\partial y}v(x,y) - \frac{\partial}{\partial y}v_m(x,y)\right| + \left|\frac{\partial}{\partial x}v(x,y) - \frac{\partial}{\partial x}v_m(x,y)\right| \\=
\left|\sum_{j=m+1}^{\infty}\sum_{I\in \Delta_j}c_{I}\varphi_{I}*\left(\frac{\partial}{\partial y}P_y\right)\right|(x) + \left|\sum_{j=m+1}^{\infty}\sum_{I\in \Delta_j}c_{I}\varphi_{I}*\left(\frac{\partial}{\partial x}P_y\right)\right|(x)\\  \leq \frac{C2^{-m}}{y^2} \leq \frac{C}{y}.
\end{multline}
Recall that $\varphi \in C^{10}(\mathbb{R})$ and that for any two different intervals $I, J \in \Delta_j$ the supports of $\varphi_I$ and $\varphi_J$ are disjoint. A simple rescaling gives
\begin{equation}\notag
\left|\sum_{I\in \Delta_j}c_{I}\varphi_{I}*\left(\frac{\partial}{\partial x}P_y\right)\right|(x) + \left|\sum_{I\in \Delta_j}c_{I}\varphi_{I}*\left(\frac{\partial}{\partial y}P_y\right)\right|(x) \leq C2^j,\quad x\in\mathbb{R},\, y>0,\, j\in\mathbb{N}.
\end{equation}
It follows that
\begin{multline*}
\left|\frac{\partial}{\partial x}v_m(x,y)\right| + \left|\frac{\partial}{\partial y}v_m(x,y)\right| = \left|\Phi_{m}*\left(\frac{\partial}{\partial x}P_y\right)\right|(x) + \left|\Phi_{m}*\left(\frac{\partial}{\partial y}P_y\right)\right|(x)\\ \leq  C\sum_{j=0}^m2^j = C2^{m+1} \leq \frac{C}{y},\quad x\in\mathbb{R}.
\end{multline*}
This estimate and \eqref{e:estp} imply that $v\in \mathcal{B}(\mathbb{R}^2_+)$.\par
It remains to prove \eqref{e:ex}. Fix any $k\geq k_0$. Since $\|\Phi'_{\beta_k}\|_{\infty}\leq 2^{\beta_k+1}\|\varphi'\|_{\infty}$, we see that for any $x\in (0,1]$ such that $\left|\Phi_{\beta_k}(x)\right| \geq \frac{k-1}{2}$, there exists an interval $I_x = [x-\rho_k,x+\rho_k]$, $\rho_k = \frac{2^{-\beta_k-2}}{\|\varphi'\|_{\infty}}$, such that $\left|\Phi_{\beta_k}(t)\right| \geq \frac{k-3}{2},\; t\in I_x$. Clearly then
\begin{equation}\notag
\int_{\mathbb{R}\setminus I_x}P_{y}(x-t)\,dt \leq \frac{1}{4},
\end{equation}
for $0<y\leq \frac{\rho_k}{10}$.
Now if we fix such an $x$ and put $y_k = \frac{2^{-\beta_k-2}}{10\|\varphi'\|_{\infty}}$, we have
\begin{multline*}
|v_{\beta_k}(x,y)| = \left|\Phi_{\beta_k}*P_y\right|(x)\\ \geq \left|\int_{I_x}\Phi_{\beta_k}(t)P_y(x-t)\,dt\right| - \left|\int_{\mathbb{R}\setminus I_x}\Phi_{\beta_k}(t)P_y(x-t)\,dt\right|\\ \geq
\frac{k-3}{2} - \|\Phi_{\beta_k}\|_{\infty}\int_{\mathbb{R}\setminus I_x}P_y(x-t)\,dt \geq
 \frac{k-3}{2} - \frac{k+2}{4}\\ = \frac{k-4}{2} \geq \frac{w_0(2^{-\beta_k})-4}{2} ,
\end{multline*}
so that
\begin{equation}\notag
\lambda_1\left(\left\{x\in (0,1]: \left|v_{\beta_k}\right|(x,y) \geq \frac{w_0(2^{-\beta_k})-4}{2}\right\}\right) \geq \frac{1}{10},\quad 0<y\leq y_k.
\end{equation}
Again, following \eqref{e:est}, we obtain
\begin{equation}
|v_{\beta_k}(x,y_k) - v(x,y_k)| \leq C_0\|\varphi'\|_{\infty}.
\end{equation}
The doubling property of $w_0$ implies 
\begin{equation}\notag
w_0(y_k) = w_0\left(\frac{2^{-\beta_k-2}}{10\|\varphi'\|_{\infty}}\right) \leq C_1\frac{w_0(2^{-\beta_k})-4}{2} - C_0\|\varphi'\|_{\infty} \leq C\frac{w_0(2^{-\beta_k})-4}{2}
\end{equation}
for $k$ large enough.
We therefore have
\begin{equation}\notag
\lambda_1\left(\left\{x\in (0,1]: \left|v\right|(x,y_k) \geq \frac{w_0(y_k)}{C}\right\}\right) \geq \frac{1}{10},\quad y_k = \frac{2^{-\beta_k-2}}{10\|\varphi'\|_{\infty}},
\end{equation}
which is \eqref{e:ex}.
\end{proof}
The way we did the construction of $v$ is, probably, not the most effective one. Unfortunately we could not use here the dyadic martingale methods, as described, for example, in \cite{GM:05}. Instead we decided to employ the wavelet-like series for Bloch functions (see \cite{M:01} for the description of $\mathcal{B}(\mathbb{R}^2_+)$ in terms of wavelet representation).\par
Note that the averaging process $u(x,\delta)\rightarrow H(x,\delta) = \int_{0}^1u(x,y+\delta)\,d\frac{1}{w(y)}$ can be viewed as an application of some multiplier $M$ to the boundary values of $u$,
\begin{equation}\notag
\widehat{Mf}(\tau) = \widehat{f}(\tau)\int_0^1e^{-2\pi y|\tau|}d\frac{1}{w(y)},\quad \tau\in\mathbb{R},
\end{equation}
where $f = u(\cdot,0)$ (these boundary values exist in some sense, at least as a distribution). The doubling condition \eqref{e:wdc} implies that 
\begin{equation}\notag\
\int_0^1e^{-2\pi y|\tau|}d\frac{1}{w(y)}\sim \frac{1}{w(\frac{1}{|\tau|})},\quad |\tau| > 0,
\end{equation}
so we basically divide the Fourier transform of $u(\cdot,0)$ by $w$. It would be interesting to find out the image of $M$, we see at least that $Mu\in h^{\infty}_w(\mathbb{R}^{n+1}_+)\bigcap\mathcal{B}(\mathbb{R}^{n+1}_+)$ if $u\in h^{\infty}_w(\mathbb{R}^{n+1}_+)$. The example in Proposition \ref{p:ex} shows that the image of $M$ can (in the case of slowly growing weights) be a proper subset of $h^{\infty}_w(\mathbb{R}^{n+1}_+)\bigcap\mathcal{B}(\mathbb{R}^{n+1}_+)$. For more information about the multipliers on the growth spaces see \cite{EM:14}.
\section{Concluding remark}\label{s:cr}
Consider the function $\Phi\in L^1(\mathbb{R}^n)$, and let $\Phi_y(t) = \frac{1}{y^n}\Phi\left(\frac{t}{y}\right),\; t\in\mathbb{R}^n,\, y>0$. Assume now that $\Phi\in C^{k_0}(\mathbb{R}^n)$ for some $k_0\in\mathbb{N}$, and that $\Phi^{(k)}\in L^1(\mathbb{R}^n)$ for $k\leq k_0$. Denote by $h^{\infty}_{w,\Phi}$ the space of functions $u:\mathbb{R}^{n+1}_+$ of the form $u(x,y) = \left(f*\Phi_y\right)(x)$, such that
\begin{equation}\notag
|u(x,y)| \leq Kw(y),\quad x\in\mathbb{R}^n,
\end{equation}
where $f$ is some distribution on $\mathbb{R}^n$ (we assume that this convolution exists).
The ideas and methods from \cite{EML:13} can be used to prove the following statement
\begin{theoremm}
Let $u$ be a function in $h^{\infty}_{w,\Phi}(\mathbb{R}^n_+)$. Put
\begin{equation}\notag
I_{\Phi}(x,\delta) = \int_{\delta}^1u(x,y)\,d\left(\frac{1}{w(y)}\right).
\end{equation}
If $w(y)y^{k_0-1}$ is bounded for all $y>0$, then the following inequality holds
\begin{equation}\label{e:phikern}
\limsup_{\delta\rightarrow0}\frac{I_{\Phi}(x,\delta)}{\sqrt{\log w(\delta)\log\log\log w(\delta)}} \leq C\|u\|_{w,\infty},\quad a.e.\, x\in\mathbb{R}^n
\end{equation}
for some absolute constant $C>0$.
\end{theoremm}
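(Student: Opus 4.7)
The plan is to follow the wavelet-decomposition argument of \cite{EML:13} that was used to prove the harmonic halfspace version \eqref{e:inot}, replacing the specific properties of the Poisson kernel by the corresponding properties of $\Phi$. Since $u$ is no longer harmonic, the Bloch-intermediate approximation of Lemma \ref{l:bl} is not directly available, but on the halfspace the convolutional structure $u(x,y) = (f*\Phi_y)(x)$ is enough to replicate the proof without that intermediate step.

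First I would fix $x_0\in\mathbb{R}^n$, localise to the unit cube $Q(x_0)$, and expand each section $u(\cdot,y)$ in a smooth, compactly supported wavelet basis $\{\psi_I\}$ adapted to the dyadic grid on $Q(x_0)$, choosing the number of vanishing moments comparable to $k_0$. Inserting this decomposition into $I_{\Phi}(x,\delta)$ reduces the problem to estimating the model block
\begin{equation}\notag
a_I(x,\delta) = \int_{\delta}^{1} (\psi_I * \Phi_y)(x)\,d\!\left(\frac{1}{w(y)}\right)
\end{equation}
for each dyadic cube $I\subset Q(x_0)$.

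Second, I would analyze $a_I(x,\delta)$ by splitting the integration range into the regimes where $y$ is much smaller than, comparable to, or much larger than the sidelength of $I$. The first two regimes are handled by the approximation-of-identity bound $|\psi_I * \Phi_y(x)| \lesssim 1$; the large-$y$ regime is handled by the vanishing-moment/smoothness cancellation $|(\psi_I * \Phi_y)(x)| \lesssim (|I|/y)^{k_0}$, where $|I|$ is the diameter of $I$. The hypothesis that $w(y)y^{k_0-1}$ is bounded is exactly what makes the integral of this bound against $d(1/w(y))$ converge to an $O(1)$ quantity, so each wavelet coefficient contributes boundedly to $a_I(x,\delta)$. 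This is the analogue of the dualization already carried out in Lemma \ref{l:bl} and \eqref{e:intm}, with the smoothness of $\Phi$ playing the role previously played by the harmonic gradient estimate \eqref{e:gru}.

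Third, exactly as in Lemma \ref{l:ml}, I would set $w(s_k)=2^k$, $\alpha_k = -[\log_2 s_k]$, and construct a superdyadic martingale $\Lambda = \{\Lambda_k,\mathcal{F}_{\alpha_k},\mu\}$ on $Q(x_0)$ satisfying the analogues $|\Lambda_k(x) - I_\Phi(x,s_k)| \lesssim 1$ and $|\Lambda_k - \Lambda_{k+1}| \lesssim 1$ of \eqref{e:li.1}--\eqref{e:li.2}. The increment $\Lambda_k - \Lambda_{k-1}$ should collect the wavelet contributions at scales between $s_k$ and $s_{k-1}$; its uniform boundedness follows from $|u(x,y)| \leq K w(y)$ together with Step~2. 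The conclusion \eqref{e:phikern} then follows from the martingale LIL (Theorem \textbf{3.0.2} in \cite{BM:98}) by the same scale-comparison argument as in Section~\ref{ss:mtprf}. The hard part is the sharp calibration of Step~2: one must match the order of vanishing moments of the wavelets with $k_0$ and verify that the coupling $w(y)y^{k_0-1}=O(1)$ is precisely what forces the martingale increments to be $O(1)$. Absent harmonicity, no Harnack or gradient tools are available, so the entire cancellation budget must be extracted from the smoothness of $\Phi$ alone, which is what dictates the precise form of the hypothesis on $w$.
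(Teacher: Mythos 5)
A caveat before the comparison: the paper does not actually prove this statement. It appears in the concluding remarks accompanied only by the sentence that the ideas and methods from \cite{EML:13} can be used, so the only benchmark is that indicated route. Your outline --- wavelet expansion of the boundary distribution, cancellation between the vanishing moments of the wavelets and the $C^{k_0}$ smoothness of $\Phi$, then a superdyadic martingale and the LIL of \cite{BM:98} as in Section \ref{ss:mtprf} --- is consistent with that route, and you are right that the Bloch-function step of Lemma \ref{l:bl} is unavailable here, since the gradient estimate \eqref{e:gru} comes from harmonicity and Harnack.

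As a proof, however, the proposal has a genuine gap, and it sits exactly where the hypothesis $w(y)y^{k_0-1}=O(1)$ has to do its work. The place you assign to that hypothesis is vacuous: on the range $y\geq|I|$ one has $(|I|/y)^{k_0}\leq 1$, and $\int_0^1 d\left(\frac{1}{w(y)}\right)=1$, so the integral of your large-$y$ bound against $d(1/w)$ is $O(1)$ for \emph{every} weight; similarly, the analogue of \eqref{e:li.2} needs only $|u|\leq Kw$, exactly as in Section \ref{ss:dma}. The substance of the argument is the analogue of \eqref{e:li.1}: producing an honest martingale $\Lambda_k$ (conditional expectations with respect to $\mathcal{F}_{\alpha_k}$, not just ``the wavelet contributions at scales between $s_k$ and $s_{k-1}$'') within $O(1)$ of $I_{\Phi}(\cdot,s_k)$. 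In the paper's harmonic case this came from Proposition \ref{p:c2ll} applied to $H$; in the \cite{EML:13} route it requires (i) converting the growth bound $|u(x,y)|\leq Kw(y)$ into bounds on the wavelet coefficients of the boundary distribution, which is not automatic for a general kernel $\Phi$ and is nowhere derived in your sketch, and (ii) comparing the smooth-wavelet partial sums with the Haar/martingale truncations, where at a fixed point \emph{all} scales contribute and the naive scale-by-scale summation of your block bounds either never invokes the hypothesis or does not converge to an $O(1)$ error; it is in this cross-scale calibration, matching $k_0$ against the growth of $w$, that the hypothesis must actually be spent. By deferring ``the sharp calibration of Step 2'' you have deferred precisely this step: as written, the condition on $w$ is never genuinely used, so the outline is a plausible plan but not yet a proof.
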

For $0<\alpha\leq 1$ let
\begin{equation}\notag
\Lip_{\alpha}(\mathbb{R}) = \{f:\mathbb{R}\mapsto\mathbb{R}: |f(x)-f(y)| \leq C|x-y|^{\alpha},\; x,y\in\mathbb{R}\},
\end{equation}
where $C$ is some constant that depends only on $f$, and denote by $\|f\|_{\alpha}$ the infimum of such constants.
We state the result (second part of Theorem \textbf{1}) from \cite{LN:13}
\begin{theoremm}
Fix $0<\alpha<1$. For $f \in \Lip_{\alpha}(\mathbb{R})$ and $0<\varepsilon<\frac12$ let
\begin{equation}\notag
\Theta_{\varepsilon}(f)(x) = \int_{\varepsilon}^1\frac{f(x+h)-f(x-h)}{h^{\alpha}}\frac{dh}{h},\quad x\in\mathbb{R}.
\end{equation}
Then there exists a constant $C(\alpha)$, independent of $f$, such that at almost every point $x\in\mathbb{R}$, one has
\begin{equation}\label{e:skern}
\limsup_{\varepsilon\rightarrow0}\frac{|\Theta_{\varepsilon}(f)(x)|}{\sqrt{\log \frac{1}{\varepsilon}\log\log\log \frac{1}{\varepsilon}}} \leq C(\alpha)\|f\|_{\alpha}.
\end{equation}
\end{theoremm}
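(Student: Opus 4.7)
The plan is to adapt the proof of Theorem \ref{t:th1}: construct a harmonic Bloch function $\tilde H$ on $\mathbb{R}^2_+$ that approximates $\Theta_\varepsilon(f)$ with $O(1)$ error, and then invoke the superdyadic martingale machinery of Lemma \ref{l:ml} and Section \ref{ss:mtprf}. Set $w(y) := y^{-\alpha}$ on $(0,1]$ and $w \equiv 1$ on $(1,\infty)$; this is doubling with $D = 2^\alpha$ and $\log w(\varepsilon) = \alpha\log(1/\varepsilon)$, so the denominator of \eqref{e:skern} matches $\sqrt{\log w(\varepsilon)\log\log\log w(\varepsilon)}$ up to a factor of $\sqrt\alpha$. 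Let $U(x,y) = (P_y * f)(x)$ be the Poisson extension of $f$; the Hardy--Littlewood characterization of $\Lip_\alpha$ together with Cauchy's estimate for harmonic functions gives
\[
|\nabla U(x,y)|\lesssim \|f\|_\alpha y^{\alpha-1}, \qquad |\nabla^2 U(x,y)|\lesssim \|f\|_\alpha y^{\alpha-2}, \qquad |U(x,y)-f(x)|\lesssim \|f\|_\alpha y^\alpha.
\]

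Define
\[
\tilde H(x,y) := \int_0^1 \frac{U(x+h,y) - U(x-h,y)}{h^{\alpha+1}}\,dh.
\]
The integrand is $O(y^{\alpha-1} h^{-\alpha})$ near $h = 0$ by the mean value theorem, which is integrable since $\alpha < 1$; for each fixed $h$ it is harmonic in $(x,y)$, and uniform convergence of its derivatives allows differentiation under the integral, so $\tilde H$ is harmonic on $\mathbb{R}^2_+$. Splitting at $h = y$ (Hessian bound for $h \leq y$, gradient bound for $h > y$) yields $|\nabla \tilde H(x,y)| \lesssim \|f\|_\alpha/y$, so $\tilde H \in \mathcal{B}(\mathbb{R}^2_+)$; the analogous splitting of the pointwise estimate $|U(x+h,y) - U(x-h,y)| \leq \min(2C h y^{\alpha-1},\, 2\|f\|_\alpha h^\alpha)$ gives the growth $|\tilde H(x,y)| \lesssim \|f\|_\alpha \log w(y)$. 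The crucial step is the approximation property $|\tilde H(x,\varepsilon) - \Theta_\varepsilon(f)(x)| \lesssim \|f\|_\alpha$, which one proves by decomposing
\[
\tilde H(x,\varepsilon) - \Theta_\varepsilon(f)(x) = \int_0^\varepsilon \frac{U(x+h,\varepsilon) - U(x-h,\varepsilon)}{h^{\alpha+1}}\,dh + \int_\varepsilon^1 \frac{\Delta_h(x,\varepsilon)}{h^{\alpha+1}}\,dh,
\]
where $\Delta_h(x,\varepsilon) := [U(x+h,\varepsilon) - f(x+h)] - [U(x-h,\varepsilon) - f(x-h)]$: the first integral is $O(1)$ by the mean value theorem applied to $U(\cdot,\varepsilon)$, and the second by $|\Delta_h(x,\varepsilon)| \lesssim \varepsilon^\alpha$ combined with $\int_\varepsilon^1 \varepsilon^\alpha h^{-\alpha-1}\,dh = O(1)$.

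With $\tilde H$ in hand one reruns Lemma \ref{l:ml} and Section \ref{ss:mtprf} verbatim. Choose $s_k := 2^{-k/\alpha}$ so that $w(s_k) = 2^k$, put $\alpha_k := -[\log_2 s_k]$, and apply Proposition \ref{p:c2ll} to $\tilde H$ to obtain a dyadic martingale $\{M_k\}$ with $|M_k(x) - \tilde H(x, A 2^{-k})| \lesssim 1$ and $|M_{k+1} - M_k| \lesssim 1$. The superdyadic thinning $\Lambda_k := M_{\alpha_k}$ then satisfies $|\Lambda_k - \Theta_{s_k}(f)| \lesssim 1$ and $|\Lambda_{k+1} - \Lambda_k| \lesssim 1$ by the triangle inequality. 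The LIL for martingales and the interpolation between the geometric sequence $\{s_k\}$ and arbitrary $\varepsilon \to 0$ (exactly as in Section \ref{ss:mtprf}) deliver \eqref{e:skern}.

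The main obstacle is the $O(1)$-approximation between $\tilde H$ and $\Theta_\varepsilon(f)$: a naive triangle-inequality bound only yields $O(\log(1/\varepsilon))$, and the improvement depends on the exact matching of the $\varepsilon^\alpha$-modulus of continuity of the Poisson extension against the singular $h^{-\alpha-1}$-weight in $\Theta_\varepsilon$. A secondary subtlety is verifying that $\tilde H$ is genuinely harmonic on the open halfplane despite being defined by a singular integral, which is handled by the uniform integrability of the derivatives established above. Everything else in the argument is routine once these two points are settled.
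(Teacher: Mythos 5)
Your proposal is essentially correct, but note that the paper does not prove this statement at all: Theorem C is quoted from Llorente--Nicolau \cite{LN:13}, and the surrounding discussion only observes that it does \emph{not} follow from Theorem B, because writing $\Theta_{\varepsilon}(f)(x)=I_S(x,\varepsilon)$ uses the box kernel $S=\frac12\chi_{[-1,1]}$, which lacks the smoothness Theorem B requires; the paper then remarks that Llorente and Nicolau argue differently, exploiting $f\in\Lip_{\alpha}$. So what you wrote is a genuinely different, self-contained route: you bypass the non-smooth kernel obstruction by passing to the Poisson extension $U=P_y*f$ and proving that $\Theta_{\varepsilon}(f)(x)$ is within $O(\|f\|_{\alpha})$ of the harmonic Bloch function $\tilde H(x,\varepsilon)$. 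Your splitting at $h=\varepsilon$, with $|U(\cdot,\varepsilon)-f|\lesssim\|f\|_{\alpha}\varepsilon^{\alpha}$ on one side and $|\partial_x U(\cdot,\varepsilon)|\lesssim\|f\|_{\alpha}\varepsilon^{\alpha-1}$ on the other, is the right computation, and the two contributions are $O(1)$ with constants of order $(1-\alpha)^{-1}$ and $\alpha^{-1}$, consistent with the $C(\alpha)$ in the statement; the Bloch and growth bounds for $\tilde H$ are likewise correct. In effect you show that, although Theorem C is not a corollary of Theorem B, it is reachable by the same circle of ideas as Theorem \ref{t:th1}. Two points deserve attention. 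First, for the power weight $w(y)=y^{-\alpha}$ the superdyadic thinning is unnecessary: since $\log w(\delta)=\alpha\log\frac{1}{\delta}$, once $\|\tilde H\|_{\mathcal{B}}\lesssim\|f\|_{\alpha}$ is in hand you may quote the Makarov--Llorente LIL for Bloch functions directly and transfer it to $\Theta_{\varepsilon}(f)$ via your $O(1)$ approximation together with the bounded oscillation $|\Theta_{\varepsilon}(f)-\Theta_{s_k}(f)|\lesssim\|f\|_{\alpha}$ for $s_{k+1}\leq\varepsilon\leq s_k$; the thinned martingale of Lemma \ref{l:ml} is only needed for slowly growing weights, where Makarov's rate overshoots the target. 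Second, if you do run the martingale scheme, Proposition \ref{p:c2ll} as stated degenerates for the halfplane ($A=\|\phi'\|_{\infty}\sqrt{n}=0$ when $\phi\equiv0$), so you should invoke instead the classical dyadic-martingale approximation of Bloch functions in the halfplane (Makarov), after which the thinning, the increment bounds and the martingale LIL go through exactly as in Sections \ref{ss:mtprf}--\ref{ss:dma}; the same caveat about $\mu$-a.e.\ versus Lebesgue-a.e.\ conclusions that the paper inherits from Llorente's Corollary 2 applies to your version as well.
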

These two results are actually very similar. Indeed, let $S(t) = \frac12\chi_{[-1,1]}$ be the box kernel, $w(h) = h^{\alpha-1},\; h>0,$ and $u(x,y) = (f'*S_y)(x),\; x\in\mathbb{R},\, y>0$ (here $f'$ is understood in the sense of distributions, so that the convolution is well defined). We see that the condition $f\in \Lip_{\alpha}$ is equivalent to $u \in h^{\infty}_{w,S}$. Then $\Theta_{\varepsilon}$ can be rewritten in the following way
\begin{multline*}
\frac{(1-\alpha)}{2}\Theta_{\varepsilon}(f)(x) = \int_{\varepsilon}^1\frac{f(x+h)-f(x-h)}{2h}\,(1-\alpha)h^{-\alpha}dh\\ =\int_{\varepsilon}^1\left(f'*S_{h}\right)(x)\,d\left(\frac{1}{w(h)}\right)\\=
\int_{\varepsilon}^1u(x,h)\,d\left(\frac{1}{w(h)}\right) = I_S(x,\varepsilon),\quad x\in\mathbb{R},\;\varepsilon >0.
\end{multline*}
If the box kernel $S$ were smooth enough, the inequality \eqref{e:skern} would follow immediately from \eqref{e:phikern}. Unfortunately that is not the case, and  we see that Theorem \textbf{C} cannot be deduced from Theorem \textbf{B}. To prove Theorem \textbf{C} Llorente and Nicolau used a different argument, which employed the fact that $f \in \Lip_{\alpha}$.\par
\textit{Acknowledgements}. We would like to thank A. Nicolau and E. Malinnikova for fruitful discussions of the problem. Part of the work was done while the author was visiting the Chebyshev laboratory at St. Petersburg State University. We are grateful to the laboratory for hospitality and great working conditions. 

\end{document}